\newtheorem{thm}{Theorem}[section]
\newtheorem{prop}[thm]{Proposition}
\newtheorem{lem}[thm]{Lemma}
\newtheorem{cor}[thm]{Corollary}
\newtheorem{conj}[thm]{Conjecture}
\newtheorem{prob}[thm]{Problem}
\def\N{\mathbb{N}}
\def\Z{\mathbb{Z}}
\def\N{\mathbb{N}}
\def\R{\mathbb{R}}
\def\C{\mathbb{C}}
\def\FF{\mathcal{F}}
\def\FFi{\overline{\mathcal{F}}}
\def\SS{\mathcal{S}}
\def\wdim{\text{\rm Widim}}
\def\mdim{\text{\rm mdim}}
\def\Int{\text{\rm Int }}
\def\dist{\text{\rm dist}}
\def\diam{\text{\rm diam}}
\numberwithin{equation}{section}
\newcommand{\norm}[1]{\left\lVert#1\right\rVert}
\title[]{Lowering mean topological dimension}
\author{Ruxi Shi}
\address{Shanghai Center for Mathematical Sciences, Fudan University, 200438 Shanghai, China}
\email{ruxishi@fudan.edu.cn}
\thanks{The author was partially supported by the New Cornerstone Science Foundation through the New Cornerstone Investigator Program and NSFC No. 12231013. }
\keywords{}
\subjclass[2020]{}
\thanks{} 
\begin{document}
	
	\maketitle

\begin{abstract}
In this paper, we prove that for a topological dynamical system with positive mean topological dimension and marker property, it has factors of arbitrary small mean topological dimension and zero relative mean topological dimension which separate points.
\end{abstract}
\section{Introduction}

A topological dynamical system is a pair $(X, T)$ where $X$ is compact metrizable space and $T : X \to X$ is a homeomorphism. The fundamental invariant of dynamical systems is topological entropy, which has been the subject of extensive study for many years. Mean dimension is a new invariant of topological dynamical systems introduced by Gromov \cite{G}. Heuristically, it counts the number of real-valued parameters per unit time needed for describing a system. Moreover, the mean dimension of a dynamical system vanishes when it has finite topological entropy. From this perspective, the mean dimension measures the complexity of dynamical systems of infinite entropy.

For a dynamical system $(X,T)$, it is said to be {\it aperiodic} if $T^nx=x$ implies $n=0$. Moreover, it is said to satisfy the {\it marker property} if for any positive integer $N$ there exists an open set $U \subset X$ satisfying that 
$$
U \cap T^n U = \emptyset \text{ for } 0 < n < N \text{ and }  X=\cup_{n\in \Z} T^n U.
$$
For example, a dynamical system with an aperiodic minimal factor has the marker property \cite[Lemma 3.3]{L99}. It is clear that the marker property implies the aperiodicity. But the inverse is proved to be false recently \cite{tsukamoto2022g, shi2021marker}.

The marker property has been extensively investigated in the theory of mean dimension, for example, the dynamical embedding problem.  Gutman and Tsukamoto \cite{gutman2020embedding} showed that a dynamical system having the marker property with mean dimension smaller than $N/2$ can be embedded in the shift of Hilbert cube $([0,1]^N)^\Z$. Lindenstrauss and Tsukamoto \cite{lindenstrauss2019double} proved that if a dynamical system has the marker property, then there exists a metric such that the upper metric mean  dimension is equal to mean dimension of this dynamical system. The {\it metric mean  dimension} was introduced by Lindenstrauss and Weiss \cite{LindenstraussWeiss2000MeanTopologicalDimension} which  majors the value of mean topological dimension. See also \cite{Gut15Jaworski,GutLinTsu15, gutman2017embedding,tsukamoto2020potential} and references therein where the marker property was investigated. Lindenstrauss \cite{L99} proved that for a dynamical system with marker property and zero mean dimension has factors of arbitrary small topological entropy which separate points. More precisely, he proved that:

\begin{thm}[\cite{L99}, Corollary 6.9]\label{L thm}
If $(X,T)$ is a dynamical system with  marker property and $\mdim(X,T)=0$, then for any $\delta>0$ and any two distinct points $z\not=z'\in X$, there exists a factor $(Y,S)$ of $(X,T)$ via $\pi$ such that 
$$
\pi(z)\not=\pi(z') \text{ and } h_{top}{(Y, S)}<\delta.
$$
\end{thm}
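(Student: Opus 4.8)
\emph{Proof proposal.} The plan is to route the argument through the \emph{small boundary property} (SBP) of Lindenstrauss and Weiss. For $E\subseteq X$ write $\mathrm{ocap}(E)=\lim_{n\to\infty}\tfrac1n\sup_{x\in X}\#\{\,0\le i<n:T^ix\in E\,\}$ for its orbit capacity, and recall that $(X,T)$ has the SBP if $X$ has a basis of open sets $U$ with $\mathrm{ocap}(\partial U)=0$. The first step --- and the only place where the hypothesis $\mdim(X,T)=0$ rather than mere aperiodicity is really used --- is to show that a system with the marker property and $\mdim(X,T)=0$ has the SBP: zero mean dimension supplies, for every finite open cover and every $\varepsilon>0$, a refinement whose ``$\varepsilon$--boundary'' is combinatorially thin, and the marker property is precisely what lets one redistribute that thinness uniformly along orbits, turning it into a genuine boundary of small orbit capacity; intersecting countably many such refinements produces a small--boundary basis. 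One may alternatively just invoke this implication as known.

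Granting the SBP, fix $\delta>0$ and distinct $z\neq z'$. By Urysohn's lemma pick a continuous $\phi\colon X\to[0,1]$ with $\phi(z)=0$ and $\phi(z')=1$, so that $\mathcal C=\{\,\phi^{-1}[0,\tfrac23),\ \phi^{-1}(\tfrac13,1]\,\}$ is an open cover separating $z$ from $z'$. Using the SBP, refine $\mathcal C$ to a finite partition $\mathcal P=\{P_1,\dots,P_r\}$ of $X$ with $\mathrm{ocap}\big(\bigcup_j\partial P_j\big)=0$ that still separates $z$ and $z'$, with $z,z'$ in the interiors of their atoms. Next fix a large integer $N$ and, combining the marker property with the SBP, choose an open marker set $U$ for the parameter $N$ (so $U\cap T^jU=\emptyset$ for $0<|j|<N$ and $X=\bigcup_{n\in\Z}T^nU$) with the additional property $\mathrm{ocap}(\partial U)=0$. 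From $U\cap T^jU=\emptyset$ one gets $\mathrm{ocap}(U)\le 1/N$ (an orbit meets $U$ at most once in any $N$ consecutive times), and by compactness every orbit returns to $U$ with gaps lying in $[N,M]$ for some finite $M$. For $x\in X$ put $\ell(x)=\min\{\,k\ge 0:T^{-k}x\in U\,\}\in\{0,\dots,M-1\}$, the height of $x$ in its Rokhlin column, and let $b(x)\in\{1,\dots,r\}$ be the index of the $\mathcal P$--atom of the base point $T^{-\ell(x)}x$; arranging in addition that $\mathcal P$ separates $T^{-\ell(z)}z$ from $T^{-\ell(z')}z'$, we may assume $(\ell(z),b(z))\neq(\ell(z'),b(z'))$.

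The factor is then obtained from the $T$--equivariant map $x\mapsto\big((\ell(T^nx),b(T^nx))\big)_{n\in\Z}$. Since $\ell$ and $b$ jump only on $\partial U\cup\bigcup_j\partial P_j$, one first makes this map continuous by replacing the level-indicators and the $\mathbf 1_{P_j}$ by continuous functions agreeing with them off that boundary, thereby obtaining a genuine continuous $T$--equivariant $\pi\colon X\to Y\subseteq Q^{\Z}$ into a finite--dimensional cube $Q$, and hence a factor $(Y,S)$ of $(X,T)$ via $\pi$. Because $z$ and $z'$ avoid the boundaries, $\pi$ still records $(\ell,b)$ exactly at some coordinate that witnesses $(\ell(z),b(z))\neq(\ell(z'),b(z'))$, so $\pi(z)\neq\pi(z')$. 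For the entropy, observe that along any orbit $\ell$ increases by one between consecutive visits to $U$, during which $b$ is constant; hence the name of $x$ over a window $[0,L)$ is determined by $\ell(x)$, by the reset set $R=\{\,n\in[0,L):T^nx\in U\,\}$ (with $\#R\le(\tfrac1N+o(1))L$ since $\mathrm{ocap}(U)\le 1/N$), and by the atom index at each $n\in R$ --- so there are at most $M\binom{L}{\lfloor 2L/N\rfloor}r^{2L/N}$ of them; and because $\mathrm{ocap}\big(\partial U\cup\bigcup_j\partial P_j\big)=0$, passing to the smoothed map alters only $o(L)$ coordinates per window and so inflates the count by a further factor $e^{o(L)}$. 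Thus $h_{top}(Y,S)\le\varepsilon(N)+\tfrac1N\log r$ with $\varepsilon(N)\to 0$, and fixing $N$ large enough gives $h_{top}(Y,S)<\delta$.

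The step I expect to be the genuine obstacle is the first one: deriving the small boundary property from $\mdim(X,T)=0$ together with the marker property, and, with it, producing a marker set $U$ and a separating partition $\mathcal P$ whose boundaries have orbit capacity \emph{zero} rather than merely small boundary --- this uniform-in-$x$ control is exactly what breaks down without the marker property. A second, more technical, point is to realise the symbolic map by a genuinely continuous factor without spoiling the entropy estimate, which forces one to smooth only across the (zero orbit capacity) boundary; once Step 1 is in hand this is largely routine. The only quantitative input then is $\mathrm{ocap}(U)\le 1/N$, which forces Rokhlin columns of length $\ge N$ each contributing only $O(1/N)$ to the entropy, while the label $b$ is what makes the resulting small-entropy factor separate the prescribed pair.
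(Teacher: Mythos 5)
The paper does not actually prove this statement: it is quoted (with ``aperiodic minimal factor'' relaxed to ``marker property'') from Lindenstrauss \cite{L99}, Corollary 6.9, and the machinery developed in the present paper (Voronoi tilings, the width-dimension Lemma \ref{lem:key}) is aimed at the mean-dimension analogue, Theorem \ref{main thm}, not at this entropy statement. So the only meaningful comparison is with the cited source, and your proposal does reconstruct its strategy faithfully: marker property plus $\mdim(X,T)=0$ gives the small boundary property; a marker set $U$ of order $N$ with $\mathrm{ocap}(\partial U)=0$ and a separating partition $\mathcal P$ with zero orbit capacity boundary yield a tower coding $(\ell,b)$ whose $L$-names are determined by the initial phase, the at most $2L/N$ reset times, and the atom labels at resets; this gives entropy at most $\tfrac{2}{N}\log r$ plus a binomial term vanishing as $N\to\infty$. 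The outline, the order of choices ($\mathcal P$ and $r$ fixed before $N$ --- modulo the small circularity in your ``arranging in addition'' clause, which is repaired by choosing $U$ first or by noting that $\ell(z)\neq\ell(z')$ already separates), and the quantitative count are all sound.

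The caveat is that essentially all of the mathematical content is concentrated in your Step 1, the implication (marker $+$ $\mdim=0$) $\Rightarrow$ SBP. You describe it only impressionistically; ``redistributing thinness along orbits'' is not a proof, and this implication is itself the hard theorem of \cite{L99}, proved there by an inductive construction of small-boundary partitions from $\wdim_\epsilon$ estimates on towers. If you invoke it as known --- as you offer to, and as the paper invokes the whole corollary --- the remainder goes through, but two points still need to be written out rather than asserted. First, the tower coordinates $(\ell,b)$ are only Borel, so one must replace them by continuous functions that agree with them off an open neighbourhood $V\supset\partial U\cup\bigcup_j\partial P_j$ with $\mathrm{ocap}(V)$ small (using that orbit capacity of a closed set is the infimum over open neighbourhoods, and that the $\sup_x$ in its definition makes the $o(L)$ bound on perturbed coordinates uniform in $x$); since the smoothed factor takes values in a cube rather than a finite alphabet, the entropy bound must be run through $\epsilon$-covering numbers, with the $o(L)$ boundary coordinates contributing a factor $C(\epsilon)^{o(L)}$. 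Second, one must check that the points actually being separated avoid the smoothing region. Both are standard given SBP, but neither is free, and as written the proposal is a correct sketch of the known proof rather than a self-contained argument.
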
 

The result cited, \cite[Corollary 6.9]{L99}, gives the theorem assumed there that $(X,T)$ has an aperiodic minimal factor, what is actually used in the proof is that $(X,T)$ has the marker property. 

A natural question is asked whether the similar phenomenon is for dynamical systems with positive mean dimension, i.e. how can we lower the complexity of factors of dynamical system with positive mean dimension? However, we cannot expect the same conclusion of Theorem \ref{L thm} for dynamical system with positive mean dimension, because if a dynamical system with positive mean dimension has factors of finite topological entropy which separate points, then it will be an inverse limit of dynamical system with zero mean dimension and thus have zero mean dimension itself \cite[Proposition 6.11]{L99}.  Therefore, it turns out to study how small mean dimension of factors of a dynamical system with positive mean dimension are. Meanwhile, we keep the condition that the mean dimension of factor map vanishes (it is natural in Theorem \ref{L thm} because the given dynamical system is of zero mean dimension which implies that the mean dimension of fibres always vanishes). Our main result is stated as follow.
\begin{thm}\label{main thm}
Let $(X,T)$ be a dynamical system of positive mean dimension. Suppose $(X,T)$ has the marker property. Then for any $\delta>0$ and any $z\not=z'\in X$, there exists a factor $(Y,S)$ of $(X,T)$ via $\pi$ such that 
$$
\pi(z)\not=\pi(z'), \mdim{(Y,S)}<\delta \text{ and } \mdim{(\pi, T)}=0.
$$
\end{thm}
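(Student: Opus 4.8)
\emph{Strategy.} The plan is to follow the architecture of Lindenstrauss's proof of Theorem~\ref{L thm}, replacing its topological-entropy bookkeeping by a mean-dimension bookkeeping. Fix a compatible metric $d$ on $X$, the number $\delta>0$, and $z\neq z'$. It suffices to produce, for each sufficiently large integer $N$, a factor $\pi=\pi_N:(X,T)\to(Y,S)$ with $\pi(z)\neq\pi(z')$, $\mdim(\pi,T)=0$, and $\mdim(Y,S)\le C/N$ for a constant $C=C(\delta,d)$ independent of $N$; letting $N\to\infty$ then yields the theorem. The separation of $z$ and $z'$ can be woven into the coding exactly as in \cite{L99}: fix at the outset a continuous $g:X\to[0,1]$ with $g(z)\neq g(z')$ and force the finitely many sites where $z$ and $z'$ sit in the tower to record $g$ to within an error $<|g(z)-g(z')|$, which costs only $o(1)$ in $\mdim(Y,S)$.

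\emph{The tower.} Using the marker property, for $N$ large fix an open $U=U_N\subset X$ with $U\cap T^nU=\emptyset$ for $0<n<N$ and $X=\bigcup_{n\in\Z}T^nU$, together with a partition of unity subordinate to $\{T^nU:n\in\Z\}$. This produces a continuous, $T$-equivariant ``fuzzy clock'' recording, up to the unavoidable boundary ambiguity, the position of a point within the height-$N$ Rokhlin tower over $U$ --- the standard device for turning the merely open set $U$ into continuous data usable in a coding.

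\emph{The coding.} Choose resolutions $\epsilon_1>\epsilon_2>\cdots\to0$ and, for each scale $k$, finitely many continuous $f^{(k)}_1,\dots,f^{(k)}_{m_k}:X\to[0,1]$ whose joint values pin down points of $X$ to within $\epsilon_k$, where $m_k=\wdim_{\epsilon_k}(X,d)<\infty$ by compactness (no hypothesis on $\mdim(X,T)$ is used here, so the argument covers infinite mean dimension). I would then define $\pi:X\to P^{\Z}$ for a suitable fixed compact target $P$ so that the time-$n$ coordinate of $\pi(x)$ carries a bounded amount of synchronisation data from the fuzzy clock, plus, gated by the clock, the values $f^{(k)}_i(T^nx)$ for the few indices $(k,i)$ the clock ``switches on'' at the current tower-position. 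The gating schedule is the heart of the matter: within a single passage through a height-$N$ tower each scale $k$ is switched on in a controlled pattern so that, on one hand, every site pays only $O(1)$ coordinates of synchronisation data and $O(1/N)$ worth of gated values on average --- whence $Y:=\overline{\pi(X)}$ with the shift has $\mdim(Y,S)\le C/N$ --- while, on the other hand, the fibres of $\pi$ come out mean-dimensionally negligible (e.g.\ $\wdim_\eta(\pi^{-1}(y),d_n)=o(n)$ uniformly in $y$ for each fixed $\eta>0$), which is what forces $\mdim(\pi,T)=0$. Continuity and $T$-equivariance of $\pi$ are then verified by the usual partition-of-unity manipulations.

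\emph{Main obstacle.} The difficulty is precisely this reconciliation: the coding must be cheap enough per unit time that $\mdim(Y,S)$ is small, yet faithful enough that its fibres carry no mean dimension. This is possible only because the naive subadditivity $\mdim(X,T)\le\mdim(Y,S)+\mdim(\pi,T)$ can fail --- indeed, read against a system of large or infinite mean dimension the theorem says it fails badly --- and the marker property is exactly what supplies the room to exploit this, namely arbitrarily deep towers in which a whole finite batch of resolution scales can be accommodated at density one in time while only $O(1/N)$ is spent per site. The remaining work --- designing the gating schedule so that both requirements hold simultaneously and uniformly in the scale, controlling the boundary ambiguity of the fuzzy clock, and carrying out the $\mdim$ estimates --- is laborious but follows the pattern of the corresponding estimates in \cite{L99}.
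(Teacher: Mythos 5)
There is a genuine gap, and it is the central mechanism, not a technicality. Your coding records, at time-density $O(1/N)$ inside each tower pass, the values of functions $f^{(k)}_i$ whose joint values pin points of $X$ down to scale $\epsilon_k$. Such sparse ``pinning'' data cannot make the fibres mean-dimensionally negligible when $\mdim(X,T)>0$: over a window of length $N$, the fibre of $\pi$ contains all points whose orbits agree with the given one only on the sparse set of recorded times, so at the unrecorded times (density $1-o(1)$) the orbit is essentially unconstrained; already for a subshift of $[0,1]^{\Z}$-type one sees $\wdim_\eta(\pi^{-1}(y),d_N)\gtrsim (1-o(1))\,\wdim_\eta(X,d_N)$, i.e.\ $\mdim(\pi,T)$ stays close to $\mdim(X,T)$, not $0$. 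The reconciliation you flag as the ``main obstacle'' is not supplied by the marker property at all: the marker property only provides the equivariant tiling/clock (Proposition \ref{prop:tiling 2}) used to place the recording windows continuously and with small boundary density. The engine that makes the Hurewicz-type inequality fail is the Gromov--Tsukamoto width lemma, in the form of Lemma \ref{lem:key}: a single continuous map $F:X\to[0,1]^{m-1}$ \emph{all} of whose fibres satisfy $\wdim_\epsilon(F^{-1}(p),d_n)\le \wdim_{\epsilon/2}(X,d_n)/m$ for the fixed window length $n$. The paper's factor records, once per tile, this one value $F(T^a x)$ (spread over $m-1$ coordinates near the tile boundary), so that knowing the factor image collapses the $d_n$-width of the whole window by the factor $m$, while the recorded data occupies only the small-density boundary zone of the tiling, keeping $\mdim(Y,S)$ small. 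Your sketch never constructs, or even invokes, a map with this fibre-width property, and without it the two requirements you must reconcile are genuinely irreconcilable.

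A second, smaller discrepancy: you assert that each single factor $\pi_N$ already has $\mdim(\pi_N,T)=0$ exactly, with fibre bounds $\wdim_\eta(\pi^{-1}(y),d_n)=o(n)$ uniformly for \emph{every} fixed $\eta$. One coding of the above type only controls the relative width at the one scale $\epsilon$ built into Lemma \ref{lem:key}; this is why the paper's Proposition \ref{main prop} only claims $\lim_N \sup_y \wdim_\epsilon(\pi^{-1}(y),d_N)/N<\delta$ for a fixed $\epsilon$, and Theorem \ref{main thm} is then obtained by taking a countable product of such factors with parameters $\epsilon=1/n$ and $\delta/2^n$, so that the relative mean dimension of the product factor vanishes while $\mdim(Y,S)\le\sum_n \delta/2^n<\delta$. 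If you repair the main gap by incorporating a Lemma \ref{lem:key}-type map into your gating scheme, you will still need this (or an equivalent) multi-scale product step to pass from ``small relative $\epsilon$-width at one scale'' to ``$\mdim(\pi,T)=0$''.
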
 

As we mentioned before, the condition $\mdim{(\pi, T)}=0$ holds automatically in Theorem \ref{L thm} because $\mdim{(\pi, T)}\le \mdim(X,T)=0$. On the other hand, the mean dimension of $ \mdim{(Y,S)}$ is necessarily positive by \cite[Theorem 1.5]{tsukamoto2022analogue}\footnote{It states that if $\pi : (X, T) \to (Y, S)$ is a factor map between dynamical systems and $\mdim(Y, S) = 0$ then
$\mdim(X, T) = \mdim(\pi, T)$.}. As a corollary of Theorem \ref{main thm}, we could present any dynamical system with positive mean dimension and marker property as a product of dynamical systems with arbitrary small mean dimension. 

\begin{cor}\label{main cor}
    Let $(X,T)$ be a dynamical system of positive mean dimension. Suppose $(X,T)$ has the marker property. Then for any $\delta>0$, there exists a family $\{(Y_n, S_n) \}_{n\in \N}$ of dynamical systems with $\mdim(Y_n, S_n)<n\delta$ such that 
    $$
    (X, T)= \varprojlim_{n\in \N} (Y_n, S_n).
    $$
\end{cor}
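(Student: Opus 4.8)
The plan is to deduce the corollary from Theorem~\ref{main thm} by a standard compactness-and-enumeration argument; the only point that goes beyond bookkeeping is to upgrade the conclusion of Theorem~\ref{main thm} from \emph{separates one prescribed pair of points} to \emph{has uniformly small fibres}.

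Fix a compatible metric $d$ on $X$ and a $\delta>0$. For a scale $\epsilon>0$ consider the compact set $K_\epsilon=\{(x,x')\in X\times X: d(x,x')\ge\epsilon\}$. For each $(x_0,x_0')\in K_\epsilon$ apply Theorem~\ref{main thm} to get a factor $p\colon(X,T)\to(W,R)$ with $p(x_0)\ne p(x_0')$ and $\mdim(W,R)<\delta$. Since $(a,b)\mapsto(p(a),p(b))$ is continuous and the diagonal of $W\times W$ is closed, $p$ separates every pair $(a,b)$ lying in some open neighbourhood of $(x_0,x_0')$. By compactness of $K_\epsilon$ finitely many such factors $p_1,\dots,p_r$ suffice, and their join $q^{(\epsilon)}=(p_1,\dots,p_r)\colon X\to Y^{(\epsilon)}\subseteq W_1\times\cdots\times W_r$ (with $Y^{(\epsilon)}$ the image, a closed $(R_1\times\cdots\times R_r)$-invariant subset, carrying the induced homeomorphism) separates every pair at distance $\ge\epsilon$; equivalently the fibres of $q^{(\epsilon)}$ have diameter at most $\epsilon$. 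Moreover, by subadditivity of mean dimension under products and its monotonicity under passing to subsystems, $\mdim(Y^{(\epsilon)})\le\sum_{i=1}^r\mdim(W_i)<r\delta$.

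Now list all the factors produced this way over the scales $\epsilon=1/m$, $m=1,2,\dots$ (scale $1$ first, then scale $1/2$, and so on), into a single sequence $(p_i\colon X\to W_i)_{i\in\N}$ with $\mdim(W_i)<\delta$ for every $i$. Put $\pi_n=(p_1,\dots,p_n)\colon X\to Y_n$ with $Y_n\subseteq W_1\times\cdots\times W_n$ the image and $S_n$ the induced homeomorphism; then $\mdim(Y_n,S_n)\le\sum_{i=1}^n\mdim(W_i)<n\delta$. Forgetting the last coordinate gives factor maps $q_n\colon(Y_{n+1},S_{n+1})\to(Y_n,S_n)$ with $q_n\circ\pi_{n+1}=\pi_n$, so $\{(Y_n,S_n),q_n\}$ is an inverse system and the $\pi_n$ assemble into a continuous equivariant map $\pi_\infty\colon(X,T)\to\varprojlim_n(Y_n,S_n)$. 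This map is surjective by compactness: for a coherent sequence $(y_n)$ the sets $\pi_n^{-1}(y_n)$ are nonempty, closed and decreasing, hence have a common point. It is injective because the $p_i$ separate points: given $x\ne x'$, choose $m$ with $1/m<d(x,x')$; the factors built for scale $1/m$ occur among $p_1,\dots,p_N$ for some $N$, and one of them separates $x$ and $x'$, so $\pi_N(x)\ne\pi_N(x')$. A continuous equivariant bijection between compact metric dynamical systems is a topological conjugacy, which gives $(X,T)=\varprojlim_n(Y_n,S_n)$.

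The only nonroutine step is the passage from Theorem~\ref{main thm}, which resolves a single pair, to a factor with uniformly small fibres; this is exactly the compactness argument in the second paragraph (using continuity of the separating factor maps and closedness of the diagonal). Controlling the mean dimension of the joins then relies solely on the two standard properties of mean dimension invoked above—subadditivity under products and monotonicity under subsystems—so no further obstacle arises.
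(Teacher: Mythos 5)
Your proof is correct and takes essentially the same route as the paper: apply Theorem~\ref{main thm} to produce, for each pair of distinct points, a separating factor of mean dimension $<\delta$, extract a countable separating family (you do this by compact exhaustion of $(X\times X)\setminus\Delta(X)$ at scales $1/m$ with finite subcovers, the paper by invoking the Lindel\"of property of the same space --- the identical countability step), and then realize $(X,T)$ as the inverse limit of the partial joins $Y_n$, whose mean dimension is bounded by $n\delta$ via product subadditivity and monotonicity. Your write-up merely makes explicit the details the paper leaves implicit (openness of the separated set, surjectivity of the limit map by compactness, and that a continuous equivariant bijection of compact systems is a conjugacy).
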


\begin{proof}
    Let $\Delta(X)=\{ (x,x): x\in X \} \subset X\times X$. For any factor $\pi_Y: (X, T)\to (Y, S)$, we denote by 
    $$
    N(\pi_Y):=\{(x,x'): \pi_Y(x)\not=\pi_Y(x') \}\subset (X\times X) \setminus \Delta(X),
    $$
    which is an open set. By Theorem \ref{main thm}, the family consisting of $N(\pi_Y)$ with $\mdim(Y, S)<\delta$ forms a cover of $(X\times X) \setminus \Delta(X)$. Since $(X\times X) \setminus \Delta(X)$ is a Lindel\"of space, there is a countable subfamily of $\{N(\pi_{Z_n})\}_{n\in \N}$ which is again a cover of $(X\times X) \setminus \Delta(X)$. Let $\pi_N:=\prod_{n=0}^{N} \pi_{Z_n}$ and $Y_N:=\pi_N(X)$.
    Then the factor map $\pi:=\varprojlim_{n\in \N} \pi_{n}: X\to \varprojlim_{n\in \N} Y_n$ is a $\Z$-equivalent continuous surjection which separates points and is consequently injective. We conclude that $(X, T)= \varprojlim_{n\in \N} (Y_n, S_n).$
\end{proof}

On the other hand, Theorem \ref{main thm} is related to a problem of Hurewicz theorem for mean dimension. 
\begin{prob}\label{prob}
Let $\pi : (X,T) \to (Y,S)$ be a factor map. Does the following inequality hold true?
\begin{equation}\label{eq:prob}
    \mdim{(X,T)}\le \mdim{(Y,S)}+\mdim{(\pi, T)}.
\end{equation}
\end{prob}

This problem was originally proposed by Tsukamoto in \cite[Problem 4.8]{Tsu08} as an analogue of Hurewicz theorem \cite[P91, Theorem VI 7]{hurewicz1942dimension} which states that for a continuous map $f: X\to Y$ between compact metrizable spaces, one has that 
$$
\dim{(X)}\le \dim{(Y)}+\sup_{y\in Y}\dim{f^{-1}(y)}.
$$
Recently, Tsukamoto gave a negative answer to Problem \ref{prob} by constructing a counterexample \cite[Theorem 1.3]{tsukamoto2022analogue}. From this point of view, Theorem \ref{main thm} tells that for any dynamical system with marker property and positive mean dimension we can always find a factor such that the equation \eqref{eq:prob} doesn't not hold.

Finally, we remark that Theorem \ref{main thm} also works for $\Z^d$-action with $d>1$. Since the approach is similar but more complicated, we keep the proof of $\Z$-action in current paper.

\subsection*{Organization of the paper}
Firstly we recall the definition and basic properties of mean dimension in Section \ref{sec:Preliminary}. In Section \ref{sec:dynamical tiling} we discuss the dynamical tiling for the dynamical system with marker property. In Section \ref{sec:Width dimension of fibres}, we investigate the width dimension of fibres. In Section \ref{sec:lowering}, we present the proof of Theorem \ref{main thm}. Finally in Section \ref{sec:remark}, we discuss some open problems. 

\subsection*{Acknowledgement}

I would like to express my gratitude to Tsukamoto Masaki. This work initially arose from discussions with him. However, he graciously declined co-authorship.

\section{Preliminary}\label{sec:Preliminary}

In this section, we recall some basic notions in dynamical system, in particular, the theory of mean dimension.
\subsection{Mean dimension}
Let $d$ be a compatible distance on a compact metrizable space $X$. 
	 For a set $Z$ and $\epsilon>0$, a map $f:X\to Z$ is called \textit{$(d, \epsilon)$-injective} if $\diam(f^{-1}(z))<\epsilon$ for all $z\in Z$ where $\diam$ denotes the diameter under the metric $d$.  Gromov \cite{G} introduced the {\it $\epsilon$-width}, denoted by $\wdim_{\epsilon}(X, d)$,
which is  the
smallest integer $n$ such that there exists an $( \epsilon, \epsilon$-injective continuous map $f : X \rightarrow P$
from $X$ into some $n$-dimensional simplicial complex $P$. 
Gromov showed that the unit ball $B$ of a $n$-dimensional Banach space $(E,\|\cdot\|)$  satisfies
$$\forall 0<\epsilon<1, \ \wdim_{\epsilon}(B,\|\cdot\|)=n.$$

	Let  $T:X\to X$ be a homeomorphism. The {\it mean (topological) dimension} of $(X, T)$ is defined by
	$$
	\mdim(X, T)=\lim_{\epsilon \rightarrow 0} \mdim_\epsilon(X, T, d),
	$$
	where we let $$\mdim_\epsilon(X, T, d)=\lim_{n\to \infty} \frac{\wdim_{\epsilon}(X, d_n)}{n} \text{ and } d_n(\cdot, \cdot)=\max_{0\le i\le n-1} d(T^i \cdot, T^i \cdot).$$ The existence of the limit  follows from  the sub-additivity of the sequence $\left\{\wdim_{\epsilon}(X, d_n)\}\right)_{n\ge 1}$. In general, for a closed subset $A\subset X$, we define the {\it upper and lower mean dimensions} of $A$ by
	$$
	\overline{\mdim}(A, T)=\lim_{\epsilon \rightarrow 0} \overline{\mdim}_\epsilon(A, T, d),
	$$
	and 
	$$
	\underline{\mdim}(A, T)=\lim_{\epsilon \rightarrow 0} \underline{\mdim}_\epsilon(A, T, d),
	$$
	where $\overline{\mdim}_\epsilon(A, T, d)=\limsup_{n\to \infty} \frac{\wdim_{\epsilon}(A, d_n)}{n}$ and $\underline{\mdim}_\epsilon(A, T, d)=\liminf_{n\to \infty} \frac{\wdim_{\epsilon}(A, d_n)}{n}$.

	We mention some basic properties of mean dimension that we use in the current paper. We refer to the book \cite{Coo05} for the proofs and further properties. 
	\begin{itemize}
		\item If $A\subset B$ are two closed subsets of $X$, then \begin{equation*}\label{sub}\overline{\mdim}(A,T)\le \overline{\mdim}(B,T).\end{equation*}
		\item For $n\in \N$, \begin{equation*}\label{power}\mdim(X,T^n)=n\cdot \mdim(X,T).\end{equation*}
		\item For dynamical systems $(X_i,T_i)$, $1\le i\le n$, we have
		$$
		\mdim(\prod_{i=1}^n X_i, \prod_{i=1}^n T_i)\le \sum_{i=1}^n \mdim(X_i, T_i).
		$$
	\end{itemize}	
	
\subsection{Relative mean dimension}
Let $\pi :(X, T) \to (Y, S)$ be a factor map between two dynamical systems, i.e.  $\pi: X \to Y$ is a continuous surjection satisfying $\pi\circ T= S\circ \pi$. The {\it relative mean dimension} of $\pi$ is defined by
$$
\mdim(\pi, T)=\lim_{\epsilon\to 0}\lim_{n\to \infty} \frac{\sup_{y\in Y}\wdim_\epsilon(\pi^{-1}(y), d_n)}{n}.
$$
The limit (with respect to $n$) exists because of the sub-additivity of $\sup_{y\in Y}\wdim_\epsilon(\pi^{-1}(y), d_n)$. Tsukamoto \cite[Proposition 1.1]{tsukamoto2022analogue} proved the following proposition which clarifies the meaning of this definition.
\begin{prop}
    Let $\pi :(X, T) \to (Y, S)$ be a factor map between two dynamical systems. Then 
    $$
\mdim(\pi, T)=\sup_{y\in Y} \overline{\mdim}(\pi^{-1}(y),T)=\sup_{y\in Y} \underline{\mdim}(\pi^{-1}(y),T).
    $$
\end{prop}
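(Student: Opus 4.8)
The plan is to sandwich the common value of the three quantities between
$$
\sup_{y\in Y}\underline{\mdim}(\pi^{-1}(y),T)\le\sup_{y\in Y}\overline{\mdim}(\pi^{-1}(y),T)\le\mdim(\pi,T)\le\sup_{y\in Y}\underline{\mdim}(\pi^{-1}(y),T).
$$
The first inequality is trivial. For the second (easy) one, fix $y$: from $\wdim_{\epsilon}(\pi^{-1}(y),d_n)\le\sup_{y'\in Y}\wdim_{\epsilon}(\pi^{-1}(y'),d_n)$ one divides by $n$ and lets $n\to\infty$ (the right-hand side converges by the sub-additivity recalled above), then lets $\epsilon\to 0$ and takes the supremum over $y$, obtaining $\sup_{y}\overline{\mdim}(\pi^{-1}(y),T)\le\mdim(\pi,T)$.

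The content is the third inequality. Put $\beta:=\sup_{y\in Y}\underline{\mdim}(\pi^{-1}(y),T)$; it suffices to show that for all $\epsilon,\eta>0$ there is an integer $N$ with $\sup_{y\in Y}\wdim_{\epsilon}(\pi^{-1}(y),d_N)\le(\beta+\eta)N$, since then $\mdim(\pi,T)\le\beta$ after $\eta\to0$ and $\epsilon\to0$. I would construct $N$ in four steps. \emph{(i) Local good horizons.} Since $\underline{\mdim}$ is the increasing $\epsilon$-limit of $\underline{\mdim}_{\epsilon}$, we have $\underline{\mdim}_{\epsilon/2}(\pi^{-1}(y),T,d)\le\beta$ for every $y$, hence there is an integer $m_y$ with $\wdim_{\epsilon/2}(\pi^{-1}(y),d_{m_y})\le(\beta+\eta/2)\,m_y$. \emph{(ii) Semicontinuity of the fibre width.} Since $\pi^{-1}(y)=\bigcap_{r>0}\pi^{-1}(\overline{B(y,r)})$ is a decreasing intersection of compacta, $\wdim_{\epsilon}(\pi^{-1}(\overline{B(y,r)}),d_{m_y})\le\wdim_{\epsilon/2}(\pi^{-1}(y),d_{m_y})$ for small $r$, so there is an open $U_y\ni y$ with $\wdim_{\epsilon}(\pi^{-1}(y'),d_{m_y})\le(\beta+\eta/2)\,m_y$ for all $y'\in U_y$. \emph{(iii) Compactness.} Extract a finite subcover $U_{y_1},\dots,U_{y_k}$ of $Y$ and set $N_i:=m_{y_i}$, $M:=\max_i N_i$, $C_0:=\wdim_{\epsilon}(X,d_M)$. \emph{(iv) Greedy block decomposition.} For $y$ fixed and $N$ large, start from time $L=0$, repeatedly pick $i$ with $S^Ly\in U_{y_i}$, use a block of length $N_i$, advance $L\mapsto L+N_i$, and stop when the next block would pass $N$; using $T^{L}\pi^{-1}(y)=\pi^{-1}(S^{L}y)$, the width of $\pi^{-1}(y)$ restricted to such a block equals $\wdim_{\epsilon}(\pi^{-1}(S^Ly),d_{N_i})\le(\beta+\eta/2)N_i$ by (ii)--(iii), while the final short remainder (of length $<M$) costs at most $C_0$. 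By sub-additivity of $\wdim_{\epsilon}$ along any concatenation of consecutive (variable-length) blocks, $\wdim_{\epsilon}(\pi^{-1}(y),d_N)\le(\beta+\eta/2)N+C_0$ for every $y$, and taking $N\ge 2C_0/\eta$ finishes the proof.

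I expect the main obstacle to be the semicontinuity claim in (ii), i.e.\ the continuity-from-above property of $\wdim$: if $K_m\downarrow K$ are compacta then $\wdim_{\epsilon}(K_m,d)\le\wdim_{\epsilon/2}(K,d)$ for $m$ large. To prove it I would take an $(\epsilon/2,\epsilon/2)$-injective continuous map $f:K\to P$ into a finite simplicial complex realising $\wdim_{\epsilon/2}(K,d)$, regard $P\subset\mathbb{R}^L$, extend $f$ to $\tilde f:X\to\mathbb{R}^L$ by Tietze, post-compose with a retraction of a Euclidean neighbourhood of $P$ onto $P$ (a finite complex is an ANR) to get maps $g_m:K_m\to P$, and verify that $g_m$ is $(\epsilon,\epsilon)$-injective for $m$ large: were it not, a limit/compactness argument would produce $a\ne b$ in $K$ lying in one fibre of $f$ with $d(a,b)\ge\epsilon$, contradicting $\diam_d f^{-1}(\cdot)<\epsilon/2$. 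It is exactly here that the slack between the scale $\epsilon/2$ in (i) and the scale $\epsilon$ in (ii)--(iv) is used; the fibre-width function is genuinely not semicontinuous at a single scale. The remaining ingredients — sub-additivity of $\wdim_{\epsilon}$ over variable blocks, monotonicity of $\wdim_{\epsilon}(\cdot,d_n)$ in $n$ and under inclusion of sets, and the identity $T^{L}\pi^{-1}(y)=\pi^{-1}(S^{L}y)$ — are routine.
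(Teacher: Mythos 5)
Your proof is correct. Note that the paper itself does not prove this proposition --- it is quoted from Tsukamoto's work on the Hurewicz problem --- so there is no in-paper argument to compare against; your route (the approximate upper semicontinuity $\wdim_{\epsilon}(\pi^{-1}(\overline{B(y,r)}),d_n)\le\wdim_{\epsilon/2}(\pi^{-1}(y),d_n)$ for small $r$, via Tietze extension and an ANR retraction, followed by compactness of $Y$ and the greedy variable-length block decomposition using $T^{L}\pi^{-1}(y)=\pi^{-1}(S^{L}y)$ and subadditivity of $\wdim_\epsilon$ over blocks) is essentially the standard one used in the cited source. You have correctly identified the one genuinely delicate point, namely that the half-scale slack $\epsilon/2\to\epsilon$ is indispensable in the semicontinuity step, and the remaining reductions (monotonicity, the limit-equals-infimum consequence of subadditivity) are handled properly.
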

It tells us that the relative mean dimension $\mdim(\pi, T)$ properly measures the mean dimension of fibers of $\pi$.




\section{Dynamical tiling}\label{sec:dynamical tiling}

A collection $\{W_n\}_{n\in \Z}$ of Borel sets is said to be a {\it tiling} of $\R$  if $\cup_{n\in \Z}W_n=\R$ and the Lebesgue measure of $W_n\cap W_m$ vanishes for distinct $n,m\in \Z$. Moreover, it is called a {\it convex tiling} of $\R$ if $W_n$ are convex for all $n\in \Z$, i.e. $W_n$ are intervals on $\R$.

A {\it dynamical tiling} of a dynamical system $(X,T)$ is a family of convex tilings $\mathcal{W}=\{\mathcal{W}_x\}_{x\in X}$ where $\mathcal{W}_x=(W(x,n))_{n\in \Z}$ satisfies that 
\begin{itemize}
    \item the tiling $\{W(x,n) \}_{n\in \Z}$ is $\Z$-equivariant, i.e. $$W(T^nx,m)=-n+W(x, n+m);$$
    \item the tiles $W(x, n)$ depend continuously on $x\in X$ in the Hausdorff topology.
\end{itemize}
For $r>0$ we set $\partial_r [a,b]=[a-r, a+r] \cup [b-r, b+r]$. For a tiling $W=\{W_n\}_{n\in \Z}$, we define
$$
\partial_r W = \cup_{n\in \N} \partial_r W_n.
$$
For a Borel subset $E$ of $\R$, we let the {\it density} of $E$ be
$$
D(E)=\limsup_{R\to \infty} \frac{\left|[-R,R] \cap E \right| }{2R},
$$
where $|\cdot|$ stands for the Lebesgue measure on $\R$. 

In this section, we prove that for a dynamical system with marker property, it has a dynamical tiling with properties as follows.

\begin{prop}\label{prop:tiling 2}
Let $(X,T)$ be a dynamical system. Suppose $(X,T)$ has the marker property. Let $r>0$ and $z\not=z'\in X$. For any $\delta>0$, there exists $K>0$, a $\Z$-equivalent continuous map $\Phi: (X,T) \to ([0,2]^\Z, \sigma)$ and a dynamical tiling $\mathcal{W}=\{\mathcal{W}_x\}_{x\in X}$ satisfying that 
\begin{enumerate}
    \item $\mdim(\Phi(X), \sigma)<\delta$;
    \item $D(\partial_r \mathcal{W}_x )<\delta$ for all $x\in X$;
    \item For each $x\in X$, there exists $n$ such that $W(x,n) \setminus \partial_r W(x,n)\not=\emptyset$ and $W(x,n)\subset [-K, K]$;
    \item $\Phi(z)\not=\Phi(z')$.
\end{enumerate}
\end{prop}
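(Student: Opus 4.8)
The plan is to build the tiling from the marker property in stages. First I would use the marker property to obtain, for a large integer $N$ (to be chosen depending on $r$ and $\delta$), an open set $U$ with $U \cap T^nU = \emptyset$ for $0 < n < N$ and $X = \bigcup_{n\in\Z}T^nU$. The return-time function to $U$ then gives a $\Z$-equivariant partition of the orbit of each point into intervals of length between $N$ and some bounded quantity; because $X$ is compact and $U$ is open, one can in fact arrange (after shrinking, or by a standard argument as in \cite{gutman2017embedding, L99}) that the return times are globally bounded, say by $K_0$, and that the endpoints vary continuously. This produces a first dynamical tiling $\mathcal{W}^{(0)}$ with tile lengths in $[N, K_0]$. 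Since each tile has length $\ge N$, the boundary region $\partial_r W(x,n)$ occupies a fraction at most $4r/N$ of each tile, whence $D(\partial_r\mathcal{W}^{(0)}_x) \le 4r/N < \delta$ once $N > 4r/\delta$; this gives (2), and (3) holds with $K = K_0$ because every point lies in some tile contained in $[-K_0,K_0]$ whose interior relative to $\partial_r$ is nonempty (again using $N>4r$). The continuity of the tile endpoints in the Hausdorff topology is exactly the continuity of the marker return-time structure.

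Next I would construct the map $\Phi$. The natural choice is to encode, at each coordinate $n\in\Z$, a continuous $[0,2]$-valued function measuring the position of $n$ relative to the tile of $x$ containing it — for instance, distance to the nearest tile endpoint, suitably truncated and normalized. Concretely, set $\Phi(x)_n = \min(2, \operatorname{dist}(n, \partial(\text{tile of }T^nx \text{ at }0)))$ or a similar bump supported near tile boundaries; $\Z$-equivariance $\Phi\circ T = \sigma\circ\Phi$ follows from the equivariance of the tiling, and continuity follows from continuity of the tiles. The key point for (1) is that $\Phi(X)$ is a subshift whose values along an orbit are \emph{determined by a bounded amount of combinatorial data per tile} — the tile lengths, which range over the finite-ish set of return times — plus the deterministic profile within each tile; so $\Phi(X)$ has zero topological entropy, hence (by the standard fact $\mdim \le \frac{\log(\text{alphabet})}{\cdots}$, or more directly because a bounded-alphabet subshift has mean dimension $0$) $\mdim(\Phi(X),\sigma) = 0 < \delta$. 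I should make sure the alphabet of $\Phi$ is genuinely finite, or at least that $\Phi(X)$ embeds in a finite-alphabet subshift up to the width estimates; truncating the position function to finitely many values does this at the cost of nothing, since (2) and (3) only concern the geometry of $\mathcal{W}$, not $\Phi$.

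The remaining issue is (4): ensuring $\Phi(z)\ne\Phi(z')$. If the tiling constructed above already separates $z$ and $z'$ — i.e. the tile structures of $z$ and $z'$ differ — then $\Phi(z)\ne\Phi(z')$ automatically and we are done. If not, I would perturb the construction: since $z\ne z'$ and both are aperiodic (the marker property implies aperiodicity), there is a coordinate and a continuous function on $X$ separating some $T^kz$ from $T^kz'$; I would add to $\Phi$ one extra coordinate-family carrying a sufficiently fine (finite-alphabet) quantization of such a function, \emph{placed only at tile boundaries} so as not to disturb the mean-dimension bound, or alternatively refine finitely many tiles near the orbits of $z,z'$. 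This is the main obstacle: one must separate $z$ from $z'$ while keeping $\mdim(\Phi(X),\sigma)$ below $\delta$ and not wrecking properties (2)–(3). The trick is that a finite-alphabet modification, or a modification supported on a density-zero set of coordinates, changes neither the entropy/mean-dimension of $\Phi(X)$ nor the density estimates on $\partial_r\mathcal{W}_x$; so the separation can be bought for free. I would also double-check that after any such refinement the tiles still depend continuously on $x$ and remain $\Z$-equivariant, which is routine given that all modifications are made via continuous equivariant recipes.
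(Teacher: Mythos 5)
Your overall strategy (marker property $\Rightarrow$ equivariant continuous tiling with long tiles $\Rightarrow$ encode the tiling into a shift factor of small mean dimension) is the same as the paper's, and your treatment of (2) and (3) is essentially right. But there is a genuine gap at exactly the point you flag as ``the main obstacle'': item (4). Your fallback devices do not work. A ``finite-alphabet quantization'' of a separating function is a discontinuous function of $x$, so adding it to $\Phi$ destroys continuity of $\Phi$; and a modification ``supported near the orbits of $z,z'$'' or ``refining finitely many tiles'' cannot be made simultaneously $\Z$-equivariant and continuous in $x$ --- equivariance and continuity force any such modification to be defined by a global recipe on all of $X$, at which point you are back to the original problem of exhibiting one concrete recipe that separates $z$ from $z'$. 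The paper resolves this not by perturbing afterwards but by building the separation into the tiling from the start: the marker set $U$ and a compact $F\subset U$ are chosen with $z\in F$ and $z'\notin U$, and the continuous function $\phi$ generating the tiling (via a Voronoi construction on the point set $\{(n,1/\phi(T^nx))\}$) satisfies $\phi(z)=1$ and $\phi(z')=0$. Crucially, $\Phi$ then encodes on the interior of each tile not only the distance to the tile boundary (your choice) but also the \emph{index} $n$ of the tile, through an auxiliary function $\gamma$ with a strict maximum at $0$: on the deep interior of $W(x,n)$ the coordinate $k$ of $\Phi(x)$ equals $1+\gamma(n-k)$. Since $0$ is a marker time for $z$ (so $0$ lies deep inside $W(z,0)$ and $\Phi(z)_0=1+\gamma(0)$) while $W(z',0)=\emptyset$ (so $\Phi(z')_0\le 1+\gamma(n)<1+\gamma(0)$ for some $n\ne 0$), separation follows. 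Your $\Phi$, which records only the geometry of $\partial\mathcal{W}_x$, cannot distinguish two points whose tilings coincide, and nothing in your construction prevents $\mathcal{W}_z=\mathcal{W}_{z'}$.

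A secondary error: your justification of (1) via ``$\Phi(X)$ has zero topological entropy / is a bounded-alphabet subshift, hence $\mdim=0$'' is wrong. The tile endpoints are real parameters, so $\Phi(X)$ is not a finite-alphabet subshift and in fact has infinite topological entropy (about $\tfrac{1}{N}\log(1/\epsilon)$ per unit time at scale $\epsilon$). The correct argument, which is what the paper does, is a width-dimension count: outside a $\delta$-fraction of coordinates (those within bounded distance of $\partial\mathcal{W}_x$) the value of $\Phi(x)_k$ is completely pinned down, so the $N$-window image lies in a set of covering dimension $O(\delta N)$, giving $\mdim(\Phi(X),\sigma)\le\delta$ but not $0$. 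This is fixable, but as written your reasoning for (1) does not stand.
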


In the rest of this section, we are going to prove Proposition \ref{prop:tiling 2}. Let $(X,T)$ be a topological dynamical system with marker property. Let $r>0$ and $z\not=z'\in X$. Let $M>0$ be a large integer which will be precised later. Then by marker property there exists an integer $M_1>M$, an open set $U$ and a compact set $F$ such that 
\begin{itemize}
    \item $z\in F\subset U\subset X$ and $z'\notin U$;
    \item $
U\cap T^{-n}U=\emptyset~\text{for}~0<|n|<M~\text{and}~X=\cup_{n\in \Z} T^{n}U;
$
\item $X=\cup_{|n|<M_1} T^nF$.
\end{itemize}
Now choose a continuous map $\phi: X\to [0,1]$ satisfying that supp$(\phi)\footnote{Here the support $\phi$  means that supp$(\phi):=\overline{\{x\in X: \phi(x)\not=0 \}}$. }\subset U$ and $\phi=1$ on $F$. By the choice of $\phi$, we have $\phi(z)=1$ and $\phi(z')=0$.  Let
$$
\mathcal{S}(x)=\left\{\left(n, \frac{1}{\phi(T^nx)}\right): n\in \Z, \phi(T^nx)>0  \right\}
$$
be the discrete set in $\R^2$. The {\it Voronoi tiling} $\mathcal{V}(x)=\{V(x,n)\}_{n\in \Z}$ is defined as follows: if $\phi(T^nx)=0$ then $V(x,n)=\emptyset$; if $\phi(T^nx)>0$ then 
$$
V(x,n)=\left\{u\in \R^2: \left|u-\left(n, \frac{1}{\phi(T^nx)}\right)\right|\le \left|u-p\right|, \forall p\in \mathcal{S}(x)\right\}.
$$
It is clearly that for each $x$, the sets $\{V(x,n)\}_{n\in \Z}$ form a tiling of $\R^2$. Let $\pi: \R^2\to \R$ be the projection on the first coordinate. Let $H=(M_1+1)^2$ and
$$
W(x,n)=\pi \left(V(x,n)\cap (\R\times \{-H \}) \right).
$$
Then the sets $\{W(x,n)\}_{n\in \Z}$ form a tiling of $\R$. Moreover, the tiling $\{W(x,n) \}_{n\in \Z}$ is $\Z$-equivariant, i.e. $W(T^nx,m)=-n+W(x, n+m)$.

By \cite[Lemma 4.1]{GutLinTsu15} (see also \cite[Lemma 6.2]{gutman2020embedding} and \cite[Claim 5.11]{gutman2019application}), we have the following basic properties of tiling $\{W(x,n) \}_{n\in \Z}$. 

\begin{lem}\label{lem:gut}
Let $x\in X$ and $n\in \Z$ with $\phi(T^nx)>0$. Then the following properties hold.	
\begin{itemize}
	\item [(1)] $V(x,n)$ contains the ball $B_{M/2}(n, 1/\phi(T^nx))$.
	\item [(2)] $W(x,n)$ is contained in $[n-M_1-1, n+M_1+1]$.
	\item [(3)] If $W(x,n)\not=\emptyset$ then $\phi(T^nx)>1/2$.
\end{itemize}
\end{lem}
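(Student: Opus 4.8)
The plan is to reduce everything to plane geometry by first extracting two facts about the site set $\mathcal{S}(x)$, writing $p_k=(k,1/\phi(T^kx))$ for its point indexed by $k$ whenever $\phi(T^kx)>0$. The first fact is a \emph{spacing} estimate: any two distinct sites satisfy $|p_n-p_m|\ge|n-m|\ge M$. The horizontal bound $|p_n-p_m|\ge|n-m|$ is automatic, and $|n-m|\ge M$ follows from the marker condition, since $\phi(T^nx),\phi(T^mx)>0$ forces $T^nx,T^mx\in\mathrm{supp}(\phi)\subset U$, hence $x\in T^{-n}U\cap T^{-m}U$, so $U\cap T^{n-m}U\ne\emptyset$, which by $U\cap T^kU=\emptyset$ $(0<|k|<M)$ is possible only if $|n-m|\ge M$. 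The second fact is a \emph{density} statement: for every $t\in\R$ there is $m\in\Z$ with $|t-m|<M_1$ and $\phi(T^mx)=1$, so $(m,1)\in\mathcal{S}(x)$; this is obtained by applying the cover $X=\bigcup_{|\ell|<M_1}T^\ell F$ to the point $T^{\lfloor t\rfloor}x$ and using $\phi|_F\equiv1$.

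Granting the spacing fact, assertion (1) is immediate: if $u$ lies within distance $M/2$ of $p_n$, then for every other site $p_m$ (with $m\ne n$) the triangle inequality gives $|u-p_m|\ge|p_n-p_m|-|u-p_n|\ge M-M/2\ge|u-p_n|$, so $u\in V(x,n)$.

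For (2) and (3) I would take $t\in W(x,n)$, i.e.\ $q:=(t,-H)\in V(x,n)$, pick a height-one site $(m,1)\in\mathcal{S}(x)$ with $|t-m|<M_1$ from the density fact, and compare $|q-p_n|$ with $|q-(m,1)|$. Squaring $|q-p_n|\le|q-(m,1)|$ gives
$$(t-n)^2+\bigl(H+\tfrac{1}{\phi(T^nx)}\bigr)^2\ \le\ (t-m)^2+(H+1)^2\ <\ M_1^2+(H+1)^2 .$$
Since $1/\phi(T^nx)\ge1$ the height term on the left is at least $(H+1)^2$, whence $(t-n)^2<M_1^2$, which gives (2) (with a little room to spare). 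Dropping instead the nonnegative term $(t-n)^2$ on the left, one gets $\bigl(H+1/\phi(T^nx)\bigr)^2<M_1^2+(H+1)^2$, and with $H=(M_1+1)^2$ this is violated once $1/\phi(T^nx)\ge2$, since then the left side is at least $(H+2)^2=(H+1)^2+2H+3>(H+1)^2+M_1^2$; hence $\phi(T^nx)>1/2$, proving (3).

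I do not expect a genuine obstacle here — the argument is entirely elementary plane geometry. The only points needing care are the constant bookkeeping: verifying that the chosen $H=(M_1+1)^2$ is large enough for the squeeze in (3) (any $H$ with $2H>M_1^2-3$ suffices), and making sure the height-one site produced by the density fact lies within horizontal distance strictly less than $M_1$ of the prescribed real number $t$, not merely of a nearby integer.
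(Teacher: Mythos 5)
Your proposal is correct, and it is essentially the standard argument: the paper itself does not reproduce a proof of this lemma but defers to \cite[Lemma 4.1]{GutLinTsu15}, whose proof proceeds exactly as you do — site separation $\ge M$ from the marker property for (1), and comparison of the distance from $(t,-H)$ to $p_n$ with the distance to a height-one site within horizontal distance $M_1$ (supplied by $X=\cup_{|\ell|<M_1}T^{\ell}F$ and $\phi|_F\equiv 1$) for (2) and (3). Your constant bookkeeping, including the check that $H=(M_1+1)^2$ satisfies $2H+3>M_1^2$, is accurate.
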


Let $R:=\max\{r, 9 \}$.  We pick a number $c$ satisfying that
$$
1<c<\frac{1}{1-\delta}.
$$
We choose $M$ so large that
$$
M>\max\left\{ \frac{2R(c+1)}{c-1}, \frac{2}{(1-\delta)^{-1}-c}  \right\}.
$$

By using Lemma \ref{lem:gut}, following the same lines of \cite[Lemma 4.2]{GutLinTsu15} (also \cite[Lemma 6.2]{gutman2020embedding} and \cite[Lemma 2.7]{gutman2019application}), we have the following lemma. 
\begin{lem}\label{lem0}
We have the following properties.
\begin{itemize}
    \item[(1)] For all $n\in \Z$, $|W(x,n)\setminus \partial_R W(x,n)|\ge (1-\delta)|W_c(x,n)|$ where $W_c(x,n)=\pi \left(V(x,n)\cap (\R\times \{-cH \}) \right)$.
    \item[(2)] $D(\partial_R \mathcal{W}_x )<\delta$ for all $x\in X$.
\end{itemize}
   
\end{lem}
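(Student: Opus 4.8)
The plan is to follow the geometric argument of \cite[Lemma 4.2]{GutLinTsu15} with the present choice of constants. Throughout write $h_n=h_n(x):=1/\phi(T^nx)$ for the height of the generator $p_n=(n,h_n)$, defined whenever $\phi(T^nx)>0$. First I would dispose of the empty cells: if $W(x,n)=\emptyset$, then the convex set $V(x,n)$ contains $p_n$, whose second coordinate $h_n\ge 1$ is $>-H$, and it misses the line $\R\times\{-H\}$; by convexity it then lies entirely in $\R\times(-H,+\infty)$, hence misses $\R\times\{-cH\}$ as well (since $-cH<-H$), so $W_c(x,n)=\emptyset$ and (1) is the trivial $0\ge 0$. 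Assume from now on that $W(x,n)\ne\emptyset$; then $h_n\in[1,2)$ by Lemma \ref{lem:gut}(3). Write $W(x,n)=[a_n,b_n]$ and note that, for $H$ large, each lateral edge of $V(x,n)$ restricted to the strip $\{-cH\le t\le -H\}$ is a single segment of the perpendicular bisector of $p_n$ with a neighbouring generator $p_m$ whose cell meets $\R\times\{-H\}$; such an $m$ has $h_m\in[1,2)$ by Lemma \ref{lem:gut}(3), and $|m-n|\ge M$ because $\operatorname{supp}(\phi)\subset U$ and $U\cap T^{-k}U=\emptyset$ for $0<|k|<M$.

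Next comes the geometric computation. A direct calculation shows that the bisector of $(a,h)$ and $(b,k)$ with $a<b$ meets the horizontal line of height $t$ at abscissa $s(t)=\frac{a+b}{2}+\frac{(k-h)(k+h+2t)}{2(b-a)}$, whence $s(-cH)-\frac{a+b}{2}=c\bigl(s(-H)-\frac{a+b}{2}\bigr)-\frac{(c-1)(k^{2}-h^{2})}{2(b-a)}$, with the last term of absolute value $<\frac{2(c-1)}{M}$ since $|k^{2}-h^{2}|<4$ and $b-a\ge M$. Applying this to the two lateral edges of $V(x,n)$, with bordering generators $m'<n<m''$ and $\rho_{-}:=a_n-\frac{m'+n}{2}$, $\rho_{+}:=b_n-\frac{n+m''}{2}$, $\Delta:=\frac{m''-m'}{2}\ge M$, $\mu:=\rho_{+}-\rho_{-}$, one obtains, in the case where the bordering generators at height $-cH$ are still $m',m''$,
\[
|W(x,n)|=\Delta+\mu,\qquad |W_c(x,n)|=\max\{\Delta+c\mu+\eta,\,0\},\qquad |\eta|<\tfrac{4(c-1)}{M}.
\]
If $|W_c(x,n)|=0$ there is nothing to prove; otherwise $\mu>\frac{-\Delta-\eta}{c}$, and putting $\alpha:=1-(1-\delta)c>0$ (legitimate since $c<(1-\delta)^{-1}$) and using $\delta-\frac{\alpha}{c}=1-\frac1c$ together with $\frac{\alpha}{c}+(1-\delta)=\frac1c$, one gets
\[
|W(x,n)|-2R-(1-\delta)|W_c(x,n)|=\delta\Delta+\alpha\mu-2R-(1-\delta)\eta>\tfrac{c-1}{c}\Delta-\tfrac1c\eta-2R\ \ge\ \tfrac{c-1}{c}\Bigl(M-\tfrac4M\Bigr)-2R,
\]
which is positive by the condition $M>\frac{2R(c+1)}{c-1}$ (together with $R\ge 9$). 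Since $|W(x,n)\setminus\partial_RW(x,n)|=\max\{|W(x,n)|-2R,\,0\}$, this is exactly (1). The remaining situation, in which a lateral edge of $V(x,n)$ turns a corner strictly between heights $-H$ and $-cH$ — this happens when an intervening cell shrinks to nothing — is handled in the same manner, comparing on each piece with the bisectors of the \emph{new} bordering generators, whose heights stay bounded at height $-cH$ by the estimates underlying Lemma \ref{lem:gut}; this book-keeping is exactly what \cite[Lemma 4.2]{GutLinTsu15} does, and the role of the second condition $M>\frac{2}{(1-\delta)^{-1}-c}$ is to make the height corrections negligible there.

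Finally, (2) follows from (1) by a packing estimate. Both $\{W(x,n)\}_{n\in\Z}$ and $\{W_c(x,n)\}_{n\in\Z}$ tile $\R$, and by Lemma \ref{lem:gut}(2) each $W(x,n)$ has length $\le 2(M_1+1)$. Fix $L>0$ and sum (1) over the (all but boundedly many) indices $n$ with $W(x,n)\subset[-L,L]$: the sets $W(x,n)\setminus\partial_RW(x,n)$ (open subintervals of the $W(x,n)$) are pairwise disjoint and disjoint from $\partial_R\mathcal{W}_x$, while the corresponding $W_c(x,n)$ are pairwise disjoint and cover $[-L,L]$ up to a set of measure $O(M_1)$, so
\[
2L-\bigl|[-L,L]\cap\partial_R\mathcal{W}_x\bigr|\ \ge\ \sum_{n}\bigl|W(x,n)\setminus\partial_RW(x,n)\bigr|\ \ge\ (1-\delta)\bigl(2L-O(M_1)\bigr).
\]
Dividing by $2L$ and letting $L\to\infty$ gives $D(\partial_R\mathcal{W}_x)\le\delta$, and the strict inequality in (2) follows from the strict inequalities defining $c$ and $M$ (equivalently, by running the argument with a slightly smaller value of $\delta$). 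The step I expect to be the main obstacle is the one just flagged: tracking which generator borders $V(x,n)$ at each of the two heights $-H$, $-cH$, together with the treatment of the thin and collapsing cells, so that the clean inequality $|W(x,n)|-2R\ge(1-\delta)|W_c(x,n)|$ holds in every case; everything else is the elementary computation above and the averaging just given.
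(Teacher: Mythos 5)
Your proposal is correct and follows the same route the paper itself prescribes: it reconstructs the geometric argument of \cite[Lemma 4.2]{GutLinTsu15}, which the paper explicitly leaves to the reader, and the algebra with $\Delta,\mu,\alpha,\eta$ together with the final averaging for part (2) checks out. Two small remarks: your displayed formula for $s(t)$ has a sign error (it should read $k+h-2t$ rather than $k+h+2t$), although the relation you then derive between $s(-cH)$ and $s(-H)$ is the correct one; and the ``corner-turning'' case you flag as the main obstacle is actually harmless for part (1), since $V(x,n)\cap(\R\times\{-cH\})$ is always contained in the slab cut out by the two bisectors with $p_{m'}$ and $p_{m''}$, so the upper bound $|W_c(x,n)|\le\max\{\Delta+c\mu+\eta,\,0\}$ holds unconditionally and no book-keeping of changing neighbours is needed in that direction.
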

Since the proofs of Lemma \ref{lem:gut} and Lemma \ref{lem0} are a largely reproduction of the ones in \cite[Lemma 4.1, Lemma 4.2]{GutLinTsu15}, we leave them to the readers to work it out the details. Now we are going to show that the dynamical tiling $\mathcal{W}$ satisfies the property in Proposition \ref{prop:tiling 2} (3).

\begin{lem}
    For each $x\in X$, there exists $n$ such that $W(x,n) \setminus \partial_r W(x,n)\not=\emptyset$ and $W(x,n)\subset [-2M_1-2, 2M_1+2]$.
\end{lem}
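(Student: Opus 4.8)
The plan is to exhibit one good tile directly, namely the Voronoi cell $V(x,n)$ attached to an integer $n$ for which $T^{n}x$ lies in the small set $F$ and which is close to the origin. First I would use the covering $X=\bigcup_{|m|<M_1}T^{m}F$ to pick an integer $n$ with $|n|<M_1$ and $T^{n}x\in F$; since $\phi\equiv 1$ on $F$ this forces $\phi(T^{n}x)=1$, the maximal value of $\phi$. Applying the same covering to $T^{k}x$ for varying $k$ shows that every block of $2M_1-1$ consecutive integers contains some $j$ with $\phi(T^{j}x)>0$, so the integers $n_-:=\max\{j<n:\phi(T^{j}x)>0\}$ and $n_+:=\min\{j>n:\phi(T^{j}x)>0\}$ exist and are finite. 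On the other hand $\{\phi>0\}\subset U$, and the marker relation $U\cap T^{-j}U=\emptyset$ for $0<|j|<M$ forces any two integers carrying a positive value of $\phi$ to differ by at least $M$; in particular $n-n_-\ge M$ and $n_+-n\ge M$.

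The core of the argument is a one-line planar estimate showing that the horizontal segment joining $\bigl(\tfrac{n_-+n}{2},-H\bigr)$ and $\bigl(\tfrac{n+n_+}{2},-H\bigr)$ lies inside $V(x,n)$. Indeed, for $t\in[\tfrac{n_-+n}{2},\tfrac{n+n_+}{2}]$ and any other site $m\neq n$ with $\phi(T^{m}x)>0$ we have $m\le n_-$ or $m\ge n_+$, whence $|t-n|\le|t-m|$; combining this with $1/\phi(T^{n}x)=1\le 1/\phi(T^{m}x)$ gives
$$
(t-n)^2+(H+1)^2\ \le\ (t-m)^2+\Bigl(H+\tfrac{1}{\phi(T^{m}x)}\Bigr)^2 ,
$$
which says precisely that $(t,-H)$ is at least as close to the site of $n$ as to the site of $m$. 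Hence $(t,-H)\in V(x,n)$ for all such $t$, so $W(x,n)=\pi\bigl(V(x,n)\cap(\R\times\{-H\})\bigr)$ contains the interval $[\tfrac{n_-+n}{2},\tfrac{n+n_+}{2}]$, of length $\tfrac{n_+-n_-}{2}\ge M$.

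It then remains to read off the two required properties. Since $W(x,n)$ is convex (a projection of a convex set met with a line), it is an interval, of length at least $M>\tfrac{2R(c+1)}{c-1}>2R\ge 2r$; writing $W(x,n)=[a,b]$ we get $b-a>2r$, so $\tfrac{a+b}{2}\in W(x,n)$ but $\tfrac{a+b}{2}\notin[a-r,a+r]\cup[b-r,b+r]=\partial_rW(x,n)$, proving $W(x,n)\setminus\partial_rW(x,n)\neq\emptyset$. Finally Lemma \ref{lem:gut}(2) gives $W(x,n)\subset[n-M_1-1,n+M_1+1]$, and $|n|<M_1$ yields $W(x,n)\subset[-2M_1-2,2M_1+2]$. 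I do not expect a genuine obstacle: the only point that needs care is that the planar inequality above really does require the site of $n$ to sit at height exactly $1$, that is $\phi(T^{n}x)=1$, which is why $n$ must be chosen as a visit to $F$ and not merely to $U$.
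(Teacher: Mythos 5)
Your proof is correct, but it follows a genuinely different route from the paper's. The paper picks the index $n$ of the tile of the auxiliary tiling $W_c(x,\cdot)$ (taken at height $-cH$) that contains $0$ with positive length, deduces $|n|\le M_1+1$ by the same argument as Lemma \ref{lem:gut}(2), and then invokes the measure estimate of Lemma \ref{lem0}(1), namely $|W(x,n)\setminus\partial_R W(x,n)|\ge(1-\delta)|W_c(x,n)|>0$, to get nonemptiness; so its proof is two lines long but leans on Lemma \ref{lem0}, whose proof is itself only sketched by reference to \cite{GutLinTsu15}. You instead choose $n$ explicitly as a visit of the orbit to $F$ with $|n|<M_1$, and prove from scratch that the horizontal segment at height $-H$ between the midpoints toward the neighbouring sites lies in $V(x,n)$: the key points — that the site over $n$ sits at the minimal height $1$ because $\phi(T^nx)=1$, and that the marker property separates sites by at least $M$ — are exactly right, and they give $|W(x,n)|\ge M>2R\ge 2r$, so the midpoint of $W(x,n)$ avoids $\partial_r W(x,n)$. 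Your argument is self-contained modulo Lemma \ref{lem:gut}(2) and in fact proves the stronger quantitative statement that some tile in $[-2M_1-2,2M_1+2]$ has length at least $M$; the paper's argument is shorter and recycles machinery already set up for Lemma \ref{lem0}. Your closing remark is also well taken: the planar inequality genuinely needs $\phi(T^nx)=1$ (so that $(H+1)^2\le(H+1/\phi(T^mx))^2$ for every competing site $m$), which is why $n$ must be a visit to $F$ and not merely to $U$. I see no gap.
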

\begin{proof}
    Let $n$ be the integer such that $0\in W_c(x,n)$ with $|W_c(x,n)|>0$. Following the same proof of Lemma \ref{lem:gut} (2), we have 
    $$
    W_c(x,n)\subset [n-M_1-1, n+M_1+1].
    $$
    It follows that $|n|\le M_1+1$. Then by Lemma \ref{lem:gut} (2) and Lemma \ref{lem0} (1), we obtain 
    $$
    W(x,n) \setminus \partial_r W(x,n)\not=\emptyset,
    $$
    and 
    $$
    W(x,n) \subset [n-M_1-1, n+M_1+1] \subset  [-2M_1-2, 2M_1+2].
    $$
\end{proof}

Now we are going to define the function $\Phi$. Define $\gamma(x)=\frac{2}{1+e^{-|x|}}$ which has a unique maximal value at $0$ (see Figure \ref{fig1}). 
\begin{figure}[H] 
\centering 
\includegraphics[width=0.8\linewidth]{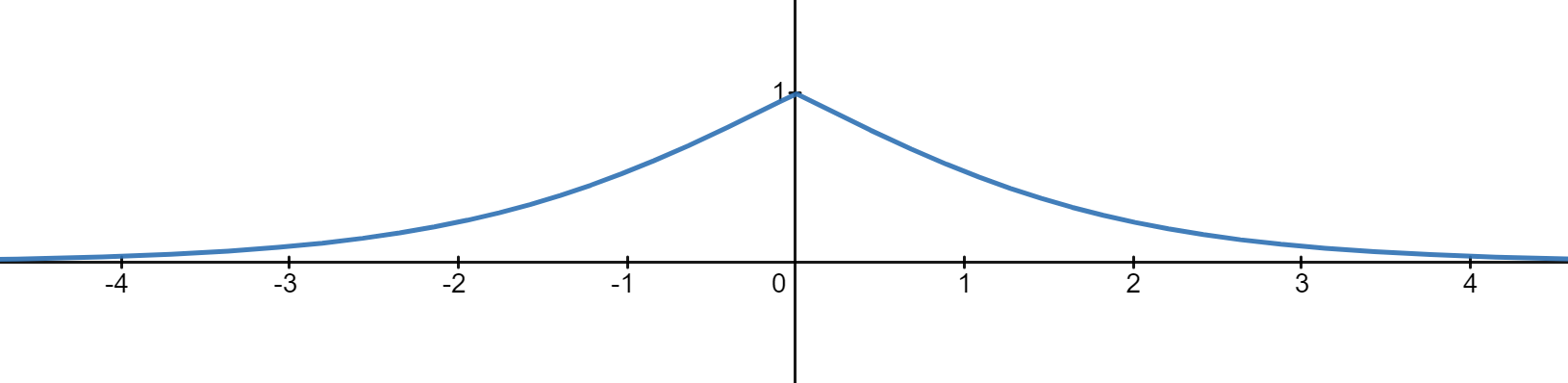}
\caption{The graph of $\gamma(x)$.} 
\label{fig1} 
\end{figure}
Let $\alpha: [0, +\infty) \to [0,1]$ be a continuous function  such that 
    \begin{itemize}
       \item $\alpha(t)=0$ for $0\le t\le 2$;
        \item $\alpha(t)=1$ for $t\ge R/3$.
    \end{itemize}
Now we define $h: X\to [0,2]$ to be a continuous function as follows: if $0\in \partial \mathcal{W}_x$ then $h(x)=0$; otherwise, there is a unique $n$ such that $0\in \Int W(x,n)$, and let 
$$
h(x)=\min\{ \dist(0, \partial \mathcal{W}_x ), 1\}+\alpha(\dist(0, \partial \mathcal{W}_x ))\gamma(n),
$$
where dist is the Euclidean distance.
Define
$\Phi: X\to [0,2]^\Z$ by $x\mapsto (h(T^nx))_{n\in \Z}$.
It is clear that $\Phi$ is continuous and $\Z$-equivariant. We will show the function $\Phi$ is what we desire in Proposition \ref{prop:tiling 2}. Notice that the image of $\Phi(x)$ is related to the tiling $\mathcal{W}_x=\{W(x,n)\}_{n\in \Z}$: 
\begin{lem}\label{lem:ob}
Let $x\in X$ and $n\in \Z$. Let $\Phi(x)=(t_k)_{k\in \Z}$. Then we have $t_k\le 1+\gamma(n-k)$ for all $k\in  \Z\cap W(x,n)$. The equality holds if and only if $\dist(k, \partial \mathcal{W}_x )\ge R/3$.
\end{lem}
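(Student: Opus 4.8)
The plan is to unwind the definitions of $\Phi$ and $h$ and push everything through the $\Z$-equivariance of the tiling $\mathcal{W}$. Writing $\Phi(x)=(t_k)_{k\in\Z}$, so that $t_k=h(T^kx)$, the first step is to record that the relation $W(T^kx,m)=-k+W(x,k+m)$ says exactly that $\mathcal{W}_{T^kx}$ is the tiling $\mathcal{W}_x$ translated by $-k$. In particular $\partial\mathcal{W}_{T^kx}=-k+\partial\mathcal{W}_x$, hence $\dist(0,\partial\mathcal{W}_{T^kx})=\dist(k,\partial\mathcal{W}_x)$, and $0\in\Int W(T^kx,m)$ holds if and only if $k\in\Int W(x,k+m)$. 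Thus if $k\in\Z\cap W(x,n)$ and $k\notin\partial\mathcal{W}_x$, then $k$ is an interior point of the interval $W(x,n)$, so $0\in\Int W(T^kx,n-k)$, and by the definition of $h$ we get
$$
t_k=h(T^kx)=\min\{s,1\}+\alpha(s)\,\gamma(n-k),\qquad s:=\dist(k,\partial\mathcal{W}_x).
$$

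Second, I would dispose of the degenerate case $k\in\partial\mathcal{W}_x$ (which covers, in particular, the situation where $W(x,n)$ is a single point): then $0\in\partial\mathcal{W}_{T^kx}$, so $t_k=0$; the inequality $t_k\le 1+\gamma(n-k)$ is immediate since $\gamma>0$; equality fails; and indeed $\dist(k,\partial\mathcal{W}_x)=0<R/3$, so the claimed equivalence holds with both sides false.

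Third, in the main case I would simply read the inequality and its equality clause off the displayed formula. Since $\min\{s,1\}\le 1$, $0\le\alpha(s)\le 1$, and $\gamma(n-k)>0$, we obtain $t_k\le 1+\gamma(n-k)$, with equality precisely when $\min\{s,1\}=1$ and $\alpha(s)=1$, i.e. when $s\ge 1$ and $\alpha(s)=1$. Because $R=\max\{r,9\}\ge 9$, we have $R/3\ge 3>1$; combined with the fact that $\alpha$ vanishes on $[0,2]$, equals $1$ on $[R/3,\infty)$, and is strictly increasing on $[2,R/3]$, the set $\{\,s\ge 1:\alpha(s)=1\,\}$ equals $[R/3,\infty)$. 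Hence $t_k=1+\gamma(n-k)$ if and only if $\dist(k,\partial\mathcal{W}_x)\ge R/3$, which is the assertion.

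The argument is essentially bookkeeping; the only point requiring care is the equivariance translation, namely correctly identifying that the tile of $\mathcal{W}_{T^kx}$ containing $0$ has index $n-k$ (not $n$) when $k\in W(x,n)$, and handling tile endpoints so the formula for $h$ is legitimately applied. I would also make explicit, if it is not stated where $\alpha$ is introduced, the monotonicity of $\alpha$ near $R/3$ that makes the ``only if'' direction sharp; without it one only gets the inequality together with the easy ``if'' direction.
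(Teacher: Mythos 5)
Your proposal is correct and follows essentially the same route as the paper's proof: translate by $T^k$ using the equivariance $W(T^kx,m)=-k+W(x,k+m)$, apply the defining formula for $h$, and read the inequality and the equality condition off $\min\{s,1\}+\alpha(s)\gamma(n-k)$ with $s=\dist(k,\partial\mathcal{W}_x)$. Your two refinements are reasonable and compatible with the paper: you explicitly treat the endpoint case $k\in\partial\mathcal{W}_x$ (the paper's proof only considers $k\in\Int W(x,n)$, though the statement allows $k\in W(x,n)$), and you correctly note that the ``only if'' direction requires choosing $\alpha$ with $\alpha(t)<1$ for $t<R/3$, an assumption the paper uses implicitly since it only states $\alpha=0$ on $[0,2]$ and $\alpha=1$ on $[R/3,\infty)$.
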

\begin{proof}
For $k\in  \Z\cap \Int W(x,n)$, we have $0\in \Int W(T^kx,n-k)$ and consequently
$$
t_k=\min\{ \dist(k, \partial \mathcal{W}_x ), 1\}+\alpha(\dist(k, \partial \mathcal{W}_x ))\gamma(n-k)\le 1+\gamma(n-k).
$$
Notice that the equality holds if and only if 
$\alpha(\dist(k, \partial \mathcal{W}_x ))=1 $ which is equivalent to $\dist(k, \partial \mathcal{W}_x )\ge R/3$.
 
\end{proof}

Recall that $z\not=z'$ are two points which are fixed before. We have the following lemma which justifies Proposition \ref{prop:tiling 2} (4).
\begin{lem}\label{lem1}
We have $\Phi(z)\not=\Phi(z')$.
\end{lem}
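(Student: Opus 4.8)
The plan is to find one coordinate on which $\Phi(z)=(h(T^nz))_{n\in\Z}$ and $\Phi(z')=(h(T^nz'))_{n\in\Z}$ disagree, namely the $0$-th: I will show that $h(z)=2$, the largest value $h$ can attain, while $h(z')<2$. Everything hinges on the two facts $\phi(z)=1$ (the global maximum of $\phi$) and $\phi(z')=0$.

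For $z$: since $\phi(z)=1>0$ we have $z\in\mathrm{supp}(\phi)\subset U$, so the marker relation $U\cap T^{-m}U=\emptyset$ for $0<|m|<M$ forces $\phi(T^mz)=0$ whenever $0<|m|<M$. Hence the only site of $\mathcal{S}(z)$ with first coordinate in $(-M,M)$ is $(0,1)$, and every other site $(m,s)$ satisfies $|m|\ge M$ and $s\ge 1$. A direct computation with squared distances then gives, for every $t$ with $|t|<M/2$ and every such other site $(m,s)$,
$$
\bigl|(t,-H)-(m,s)\bigr|^2-\bigl|(t,-H)-(0,1)\bigr|^2=m(m-2t)+(s-1)(2H+s+1)>0,
$$
since $|2t|<M\le|m|$ makes $m(m-2t)>0$ and $s\ge1$ makes the second summand nonnegative; thus $(t,-H)$ lies in the open Voronoi cell of $(0,1)$, so $(-M/2,M/2)\subset W(z,0)$. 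Because the tiles $\{W(z,n)\}_{n\in\Z}$ partition $\R$ and each obeys $W(z,n)\subset[n-M_1-1,n+M_1+1]$ by Lemma \ref{lem:gut}(2), the interval $W(z,0)$ contains $(-M/2,M/2)$ in its interior and every other tile is disjoint from it; hence $0\in\Int W(z,0)$ and $\dist(0,\partial\mathcal{W}_z)\ge M/2$. Since the choice of $M$ (together with $c>1$ and $R=\max\{r,9\}\ge9$) gives $M>2R$, we get $\dist(0,\partial\mathcal{W}_z)\ge M/2>R>\max\{R/3,1\}$, and therefore, by the definition of $h$ and the facts $\alpha\equiv1$ on $[R/3,\infty)$ and $\gamma(0)=1$,
$$
h(z)=\min\{\dist(0,\partial\mathcal{W}_z),1\}+\alpha(\dist(0,\partial\mathcal{W}_z))\,\gamma(0)=1+1=2.
$$

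For $z'$: since $\phi(z')=0$, the index $0$ does not occur among the sites of $\mathcal{S}(z')$, so $V(z',0)=\emptyset$ and thus $W(z',0)=\emptyset$. Consequently either $0\in\partial\mathcal{W}_{z'}$, in which case $h(z')=0$ by definition, or $0\in\Int W(z',n_0)$ for a unique $n_0$, and then necessarily $n_0\ne0$; in the latter case Lemma \ref{lem:ob} with $k=0$ yields $h(z')\le1+\gamma(n_0)<1+\gamma(0)=2$, because $\gamma$ has its unique maximum $\gamma(0)=1$ at the origin. In both cases $h(z')<2=h(z)$, so $\Phi(z)$ and $\Phi(z')$ differ in the $0$-th coordinate, which proves $\Phi(z)\ne\Phi(z')$.

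The single nontrivial computation is the Voronoi estimate $(-M/2,M/2)\subset W(z,0)$ — equivalently the bound $\dist(0,\partial\mathcal{W}_z)\ge M/2$ — and I expect this to be the main, though routine, obstacle. It is precisely here that the strengthened hypothesis $\phi(z)=1$ (rather than merely $\phi(z)>0$) and the marker gap of length $M$ are used: $z$'s distinguished position forces the origin into the deep interior of its central tile, which is exactly what makes $h(z)$ hit the maximal value $2$, a value $h(z')$ can never reach.
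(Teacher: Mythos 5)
Your proof is correct and follows essentially the same route as the paper: both arguments compare the $0$-th coordinates, showing $h(z)=1+\gamma(0)$ is the maximal value because $0$ lies deep inside $W(z,0)$ (forced by $\phi(z)=1$ and the marker gap), while $W(z',0)=\emptyset$ forces $h(z')<1+\gamma(0)$ via Lemma \ref{lem:ob}. Your explicit squared-distance computation yielding $(-M/2,M/2)\subset W(z,0)$ is in fact slightly more careful than the paper's bald assertion that $[-M,M]\subset W(z,0)$, and it gives exactly the bound $\dist(0,\partial\mathcal{W}_z)\ge M/2>R/3$ that is needed.
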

\begin{proof}
We claim that $\Phi(z')_0<\Phi(z)_0=1+\gamma(0)$. This implies directly that $\Phi(z)\not=\Phi(z')$. It is sufficient to show our claim.

Since $z\in F$ and consequently $\phi(z)=1$, by definition of $V(z,n)$ we have $(l,-H) \in V(z,0)$ for $|l|<M$ and therefore $[-M, M]\subset W(z,0)\not=\emptyset$. It follows that $0\in W(z,0)\setminus \partial_{R/3} W(z,0)$. Then by Lemma \ref{lem:ob} we have $\Phi(z)_0=1+\gamma(0)$.

    Since $z'\notin U$ and consequently $\phi(z')=0$, by definition of $V(z',n)$ we have $W(z',0)=\emptyset$. Therefore, by Lemma \ref{lem:ob} we have $\Phi(z')_0\le 1+\gamma(n)<1+\gamma(0)$ for some $n\not=0$ with $0\in W(x,n)$. We conclude that $\Phi(z)\not=\Phi(z')$.
\end{proof}

Now we are going to show Proposition \ref{prop:tiling 2} (1).
\begin{lem}\label{lem3}
    $\mdim(\Phi(X), \sigma)<\delta$.
\end{lem}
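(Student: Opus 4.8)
The plan is to show that the sequence $\bigl(h(T^kx)\bigr)_{k\in\Z}$ is, up to a number of real parameters per unit time bounded by roughly $1/M$, rigidly determined by the combinatorics of the tiling $\mathcal{W}_x$, and that the marker property forces this combinatorics to be sparse. The starting observation is the rigidity of $h$ along an orbit: by the $\Z$-equivariance of $\mathcal{W}$ and the definition of $h$, for every $x\in X$ and $k\in\Z$ with $k\notin\partial\mathcal{W}_x$ one has, $n$ denoting the unique index with $k\in\Int W(x,n)$,
$$
h(T^kx)=\min\{\dist(k,\partial\mathcal{W}_x),1\}+\alpha\bigl(\dist(k,\partial\mathcal{W}_x)\bigr)\,\gamma(n-k),
$$
while $h(T^kx)=0$ when $k\in\partial\mathcal{W}_x$ (the value obtained by formally putting $\dist=0$, since $\alpha(0)=0$). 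Thus $h(T^kx)$ is one fixed continuous function of the pair $\bigl(\dist(k,\partial\mathcal{W}_x),\,n-k\bigr)$, the first entry being determined by the two endpoints of the tile of $\mathcal{W}_x$ containing $k$ and the second an integer with $|n-k|\le M_1+1$ by Lemma \ref{lem:gut}(2).

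Next I would estimate the complexity of $\mathcal{W}_x$ seen through a window $[0,N)$. Since $\mathrm{supp}(\phi)\subset U$ and $U\cap T^{-m}U=\emptyset$ for $0<|m|<M$, the set $\{m:\phi(T^mx)>0\}$ is $M$-separated, so by Lemma \ref{lem:gut}(3) the non-empty tiles $W(x,n)$ have $M$-separated indices; with Lemma \ref{lem:gut}(2) this shows that at most $N/M+O_{M_1}(1)$ non-empty tiles meet $[0,N)$ and that the index of each such tile is one of $O_{M_1}(1)$ integers near it. Hence the data determining $\bigl(h(T^kx)\bigr)_{0\le k<N}$ splits into a \emph{finite} combinatorial type (which integers lie in which tile, and the index of each tile; finitely many possibilities for each fixed $N$) together with at most $N/M+O_{M_1}(1)$ real parameters, namely the endpoints of the tiles meeting the window. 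For each fixed type $\tau$ the window $\bigl(h(T^kx)\bigr)_{0\le k<N}$ is an explicit continuous function $F_\tau$ of these endpoints, which vary over a compact subset of a Euclidean space of dimension $\le N/M+O_{M_1}(1)$; so $\Phi(X)|_{[0,N)}$, a finite union of the images $F_\tau(\cdot)$, has covering dimension $\le N/M+O_{M_1}(1)$.

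From here the conclusion is routine dimension theory. Fix $\epsilon>0$ and choose $L=L(\epsilon)$ so large that the restriction map $\Phi(X)\to\Phi(X)|_{[-L,N+L)}$ is $(\epsilon/2)$-injective for a compatible metric $d_N$ on $[0,2]^\Z$; composing it appropriately with an $(\epsilon/2)$-injective map of the finite-dimensional compactum $\Phi(X)|_{[-L,N+L)}$ into a simplicial complex of dimension $\dim\Phi(X)|_{[-L,N+L)}\le N/M+O_{M_1,\epsilon}(1)$ gives $\wdim_\epsilon(\Phi(X),d_N)\le N/M+O_{M_1,\epsilon}(1)$. Dividing by $N$, sending $N\to\infty$ and then $\epsilon\to 0$ yields $\mdim(\Phi(X),\sigma)\le 1/M<\delta$ by the choice of $M$ (should $\delta\ge 1/2$, replace $\delta$ by $1/4$ at the outset, which only strengthens the remaining requirements of Proposition \ref{prop:tiling 2}). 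The one genuinely delicate point is the middle paragraph — verifying that the combinatorial type is a finite invariant and that $h$ on the window depends continuously and \emph{only} on it together with the tile endpoints — which is exactly where Lemma \ref{lem:gut} and the $M$-separation coming from the marker property are used; everything else is standard.
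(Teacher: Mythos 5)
Your route is genuinely different from the paper's, and in fact sharper. The paper's proof does not count tile endpoints at all: it uses Lemma \ref{lem:ob} to see that every coordinate $t_k$ with $\dist(k,\partial\mathcal{W}_x)\ge R/3$ is \emph{rigid} (equal to $1+\gamma(n-k)$ for the integer tile index $n$, hence ranging over a countable set), gives the remaining coordinates the trivial bound of one dimension each, and then invokes Lemma \ref{lem0}(2) to say those remaining coordinates have density $<\delta$; thus $P_N(\Phi(X))$ sits inside a countable union of faces of dimension $\le\delta N$ and $\mdim\le\delta$. You instead parametrize the \emph{entire} window by the finitely many combinatorial types together with the $\sim N/M$ tile endpoints, using only the $M$-separation of the markers and Lemma \ref{lem:gut}(2); this yields the stronger bound $1/M$ and makes no use of the density statement in Lemma \ref{lem0}(2). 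Both arguments are legitimate; the paper's has the advantage of bypassing any discussion of how the window depends on the endpoints.

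That dependence is exactly where your write-up has a real gap, and it is not the one you flagged. The inference ``$F_\tau$ is an explicit continuous function of the endpoints, which vary over a compactum of dimension $\le N/M+O_{M_1}(1)$, so the image has covering dimension $\le N/M+O_{M_1}(1)$'' uses the false principle that continuous maps do not raise dimension: a Peano curve maps $[0,1]$ onto $[0,1]^2$. What saves you here is that $F_\tau$ is not merely continuous but Lipschitz in the endpoint vector --- $\dist(k,\cdot)$ is $1$-Lipschitz in each endpoint, $\min\{\cdot,1\}$ and $\gamma$ are Lipschitz, and $\alpha$ may be taken Lipschitz --- so the image has Hausdorff, hence covering, dimension at most that of the parameter domain. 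You should say this explicitly; as written the key dimension estimate is unjustified. With that repair (and the reduction to $\delta<1/2$ you already note, so that the paper's choice of $M$ gives $1/M<\delta$), the rest of your argument --- the finiteness of the combinatorial type, the identity $\dist(k,\partial\mathcal{W}_x)=\dist(k,\partial W(x,n))$ for $k\in\Int W(x,n)$, and the standard passage from $\dim$ of finite windows to $\wdim_\epsilon(\Phi(X),d_N)$ --- is sound.
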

\begin{proof}
     Let $P_N: [0,2]^\Z \to [0,2]^N$ be the projection to the $0, 1, 2, \cdots, (N-1)$-th coordinates. By Lemma \ref{lem0} and Lemma \ref{lem:ob}, the set $P_N(\Phi(X))$ is contained in 
\begin{equation}
    \begin{split}
        E_N:=\{ e\in [0,1]^N: ~& e_k =1+\gamma(n_i-k) \text{ for } a_i\le k\le b_i, \\
        & 0\le a_0 <b_0<a_1<b_1<\dots < a_L<b_L\le N, \\
        & n_1< n_2 <\dots <n_L,\\
        &
        \frac{N-\sum_{i=0}^{L} (a_{i}-b_i)}{N}\le 
        \delta N \},
    \end{split}
\end{equation}
for $N$ large enough. Since $\dim(E_N)\le \delta N$, we conclude that
$$
\mdim(\Phi(X),\sigma)\le \limsup_{N\to \infty} \frac{P_N(\Phi(X))}{N} \le \delta.
$$
\end{proof}

Now we are at the position to show our main result in this section.
\begin{proof}[Proof of Proposition \ref{prop:tiling 2}]
    Combining Lemma \ref{lem0}, lemma \ref{lem:ob}, Lemma \ref{lem1} and Lemma \ref{lem3}, we conclude the result.
\end{proof}

\section{Width dimension of fibres}\label{sec:Width dimension of fibres}

Gromov proved the following statement in \cite[p.107]{gromov1988width} which shows that a direct analogue of the Hurewicz theorem for $\epsilon$-width dimension does not hold.
\begin{lem}
Let $(X, d)$ be a $(2n + 1)$-dimensional compact Riemannian manifold. For any $\epsilon>0$ there exists a smooth map $f : X \to [0, 1]$ such that for every $t \in [0, 1]$ we have
$$
\wdim_\epsilon(f^{-1}(t), d)\le n.
$$
\end{lem}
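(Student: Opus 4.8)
The plan is to realize $X$ as a triangulated manifold and to build the map $f$ combinatorially on the skeleta, using the extra dimension (the manifold has dimension $2n+1$, one more than twice the target bound $n$) as the slack that lets generic fibres be low-dimensional. First I would fix a smooth triangulation $\mathcal{T}$ of $X$ fine enough that every simplex has diameter $< \epsilon/10$, and pass to a suitable iterated barycentric subdivision so that the combinatorial structure is as regular as needed. The key idea, going back to Gromov, is to define $f$ so that on the preimage of a generic value $t$, the fibre $f^{-1}(t)$ meets only the $n$-skeleton of $\mathcal{T}$ in an essential way: one arranges $f$ to be, near each simplex, a coordinate-type function whose level sets avoid the barycenters of the high-dimensional simplices. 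Concretely, I would let $g\colon X \to [0,1]$ be a generic smooth function and take $f$ to be a small perturbation of $g$ chosen so that for every $t$ the set $f^{-1}(t)$ deformation-retracts (within its $\epsilon$-neighborhood) onto a subcomplex of dimension at most $n$; then the inclusion of that subcomplex into a nerve/simplicial model furnishes the required $(\epsilon,\epsilon)$-injective map witnessing $\wdim_\epsilon(f^{-1}(t),d)\le n$.

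The main step is the transversality/dimension count. Since $\dim X = 2n+1$, a generic smooth $f\colon X\to[0,1]$ has $f^{-1}(t)$ a smooth $2n$-dimensional submanifold for almost every $t$, which by itself is far too large; the gain comes not from the topological dimension of the fibre but from its $\epsilon$-width. So I would instead argue as follows: cover $X$ by the open stars of vertices of $\mathcal{T}$, and on each star use barycentric coordinates to push $f^{-1}(t)$ off the codimension-$\le n$ strata. After this adjustment each fibre is contained in the $\epsilon/2$-neighborhood of the $n$-skeleton $\mathcal{T}^{(n)}$, and the nearest-point (or barycentric-coordinate) projection of $f^{-1}(t)$ into $\mathcal{T}^{(n)}$ is continuous, $(\epsilon,\epsilon)$-injective, and lands in an $n$-dimensional simplicial complex. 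That is exactly the definition of $\wdim_\epsilon(f^{-1}(t),d)\le n$, uniformly in $t$.

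I would carry out the steps in this order: (i) fix the $\epsilon$-fine smooth triangulation and choose the perturbation parameter; (ii) construct $f$ locally on stars via barycentric coordinates and glue using a partition of unity subordinate to the cover by stars, checking smoothness; (iii) verify the \emph{uniform} statement that for \emph{every} $t\in[0,1]$, not merely generic $t$, the fibre stays within the $\epsilon/2$-neighborhood of $\mathcal{T}^{(n)}$ — this is where one must be careful, since the critical values of $f$ are precisely the $t$ for which the naive transversality argument fails, and one handles them by the combinatorial construction rather than by Sard's theorem; (iv) produce the explicit $(\epsilon,\epsilon)$-injective map into an $n$-complex and conclude. The hard part will be step (iii): ensuring that the bound $\wdim_\epsilon(f^{-1}(t),d)\le n$ holds at \emph{all} levels $t$, including singular ones, which forces the construction of $f$ to be genuinely combinatorial (adapted to $\mathcal{T}$) rather than a generic smooth map, and requires a careful local model on each closed simplex showing that every level set there retracts onto a face of dimension $\le n$.
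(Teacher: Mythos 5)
The paper gives no proof of this lemma; it is quoted from Gromov (\emph{Width and related invariants of Riemannian manifolds}, p.~107), so there is no in-paper argument to compare against. Judged on its own, your sketch has the right overall shape (fine triangulation, a combinatorial rather than generic-smooth construction, retraction of each fibre onto a subcomplex of dimension $\le n$), and you correctly flag step (iii) as the crux --- but the mechanism you propose for that crux does not work, so there is a genuine gap rather than merely missing detail.

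The problem is the claim that each fibre lies in the $\epsilon/2$-neighborhood of the $n$-skeleton and can then be sent there by nearest-point projection. Once the mesh of $\mathcal{T}$ is below $\epsilon/10$, \emph{every} point of $X$ lies within $\epsilon/10$ of the $0$-skeleton, so the containment statement is vacuous; and nearest-point projection onto a lower-dimensional complex is neither well defined nor continuous (there is no retraction of a $(2n{+}1)$-simplex onto its $n$-skeleton at all, by a degree/homology argument). What actually makes Gromov's lemma work is a join decomposition: after barycentric subdivision, each top simplex is the join $A * B$ of the face $A$ spanned by barycenters of faces of dimension $\le n$ and the face $B$ spanned by barycenters of faces of dimension $\ge n+1$, and both $A$ and $B$ have dimension $\le n$. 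Take $f$ to be the globally defined join parameter (the sum of the barycentric coordinates of the $B$-type vertices). For $t<1$ the fibre $f^{-1}(t)$ avoids the subcomplex $K_+$ spanned by all $B$-type vertices, so the radial projection \emph{away from} $K_+$ onto the subcomplex $K_-$ spanned by the $A$-type vertices is defined and continuous on the fibre; symmetrically for $t>0$ one can project onto $K_+$. Choosing whichever of the two according to $t\le 1/2$ or $t\ge 1/2$, the projection displaces each point by less than the diameter of the simplex containing it, hence is $(d,\epsilon)$-injective into a complex of dimension $\le n$ once the mesh is below $\epsilon/2$. Note the fibre is \emph{not} close to $K_-$ for $t$ near $1/2$; continuity comes from avoiding the complementary complex, and injectivity from the mesh bound --- not from proximity to the skeleton. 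Your sketch needs to be rebuilt around this join/radial-projection mechanism (plus a final $C^0$-small smoothing of the PL map $f$) for step (iii) to go through.
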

Tsukamoto proved the following variation of Gromov’s lemma \cite[Corollary 3.5]{tsukamoto2022analogue}.
\begin{lem}\label{lem:masaki}
Let $K$ be a simplicial complex with a metric $d$. For any $\epsilon>0$ and any natural number $m$ there exists a continuous map $F : K \to [0, 1]^{m-1}$ such that for every point $p \in [0, 1]^{m-1}$ we have
$$
\wdim_\epsilon(f^{-1}(p), d)\le \frac{\dim K}{m}.
$$
\end{lem}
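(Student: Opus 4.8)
The plan is to follow Gromov's folding strategy behind the lemma quoted above, adapting it from compact Riemannian manifolds to simplicial complexes and from one real-valued function to an $(m-1)$-tuple of functions. Write $n:=\dim K$. First I would pass to a subdivision of $K$ --- the same compact metric space $(|K|,d)$, only with a finer triangulation --- chosen fine enough that every simplex has $d$-diameter less than $\epsilon$; iterated barycentric subdivision does this, its mesh tending to $0$ in any compatible metric. If $n<m$ there is nothing to prove: a piecewise-linear map $F\colon K\to[0,1]^{m-1}$ in general position on the vertices is injective on each simplex (since $m-1\ge n$), hence has only finite fibres, so $\wdim_{\epsilon}(F^{-1}(p),d)=0\le n/m$. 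Assume henceforth that $n\ge m$.

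Next I would build $F=(f_1,\dots,f_{m-1})$ one coordinate at a time, maintaining the invariant that after $j$ coordinates every common level set $\{f_1=t_1,\dots,f_j=t_j\}$ has $\epsilon$-width at most $n/(j+1)$; for $j=0$ this is the elementary bound $\wdim_{\epsilon}(K,d)\le\dim K$. The inductive step asks, given the partial map $F_j$, for a single continuous function $g\colon K\to[0,1]$ --- the next coordinate --- with
$$
\wdim_{\epsilon}\bigl(F_j^{-1}(p)\cap g^{-1}(t),\ d\bigr)\ \le\ \frac{n}{j+2}\qquad\text{for every }p\text{ and every }t\in[0,1].
$$
I would construct such a $g$ in Gromov's manner: fix a fine auxiliary triangulation of $K$ with an open cover $\mathcal U$ by sets of small diameter and bounded multiplicity, and build $g$ so that each of its level sets is confined, up to scale $\epsilon$, to a neighbourhood of a subcomplex of roughly half the ambient dimension --- morally, so that $g^{-1}(t)$ carries at each of its points only about half the local multiplicity of $\mathcal U$. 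Intersecting a set of $\epsilon$-width $w$ with a level set of such a $g$ leaves pieces of $\epsilon$-width about $w/2$, and since $n/(2(j+1))\le n/(j+2)$ for all $j\ge0$, this halving suffices to propagate the invariant. After $m-1$ steps one obtains a continuous map $F\colon K\to[0,1]^{m-1}$ all of whose fibres have $\epsilon$-width at most $n/m$, which is the assertion (the target is a genuine $(m-1)$-cell, so nothing has to be adjusted there).

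The step I expect to be the main obstacle is precisely this folding estimate: it is Gromov's phenomenon itself, and the one place where topology alone will not suffice. A generic $g$ lowers the $\epsilon$-width of a slice only by $1$, which is far too weak once $n$ is large; one has to use the metric to pinch the level sets of $g$ into thin neighbourhoods of low-dimensional skeleta and then cover each slice by slightly enlarged members of a fixed low-order, small-mesh cover of the relevant skeleton. The delicate point is that enlarging a low-order cover can inflate its multiplicity --- two pieces of the skeleton that are metrically close need not overlap combinatorially --- so the auxiliary triangulation must be chosen fine enough, relative both to $\epsilon$ and to the mesh of the skeleton cover, to keep the enlarged cover of order at most $\lfloor n/(j+2)\rfloor+1$. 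Carrying out this estimate, and checking that it persists after intersection with $F_j^{-1}(p)$, is where essentially all the work lies; once it is in hand the induction closes and the lemma follows.
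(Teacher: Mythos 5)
First, note that the paper does not prove this lemma at all: it is quoted verbatim from Tsukamoto \cite[Corollary 3.5]{tsukamoto2022analogue}, so your attempt has to be measured against the known proof there (which is a direct, one-shot construction, not an induction on coordinates).

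Your proposal has a genuine gap at its central step, and I do not believe the step can be repaired in the form you state it. Your induction rests on the claim that one can choose the next coordinate function $g$ so that \emph{intersecting an arbitrary set of $\epsilon$-width $w$ with a level set of $g$ leaves pieces of $\epsilon$-width about $w/2$}. That is not what Gromov's folding gives, and it is false as stated. Gromov's construction produces a $g$ whose level sets $g^{-1}(t)$ admit an $\epsilon$-injective projection onto a \emph{half-dimensional skeleton of $K$}; hence the halving is relative to $\dim K$, not relative to the width of the slice being cut. For an arbitrary compact $A\subset K$ with $\wdim_\epsilon(A)=w$ one only gets $\wdim_\epsilon(A\cap g^{-1}(t))\le \lfloor \dim K/2\rfloor$ (because $A\cap g^{-1}(t)\subset g^{-1}(t)$), and in general no improvement over $w$: take $A$ to be a $w$-dimensional subcomplex sitting inside the very skeleton onto which $g^{-1}(t)$ projects, so the projection restricted to $A$ is the identity and the width does not drop. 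The witness that $F_j^{-1}(p)$ has small $\epsilon$-width is an $\epsilon$-embedding into some abstract complex $P$ depending on $p$; that $P$ has no relation to the skeleta of $K$ used to build $g$, so there is no mechanism to make $g$ "fold" $F_j^{-1}(p)$ rather than $K$. (A sanity check: if genuine halving per coordinate were available, $m-1$ coordinates would give $n/2^{m-1}$, which is far stronger than the lemma and stronger than anything known.)

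The actual proof avoids induction entirely. One takes a sufficiently fine iterated barycentric subdivision of $K$ (so stars of vertices have $d$-diameter $<\epsilon$), whose vertices carry a natural grading by $\{0,\dots,n\}$; one partitions the grades into $m$ classes $I_1,\dots,I_m$ of cardinality at most $\lceil (n+1)/m\rceil$, writes each simplex as the $m$-fold join of its class-subfaces, and defines $F(x)\in\Delta^{m-1}\subset[0,1]^{m-1}$ to be the vector of total barycentric weights of the classes. On a fibre $F^{-1}(p)$ some class $j_0$ has weight $\ge 1/m>0$, and the join-projection $F^{-1}(p)\to K_{I_{j_0}}$ onto the subcomplex spanned by class-$j_0$ vertices is continuous with fibres contained in vertex stars, hence $\epsilon$-injective; since $\dim K_{I_{j_0}}\le\lceil(n+1)/m\rceil-1\le n/m$, the bound follows. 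If you want to salvage your write-up, I would replace the inductive scheme by this single $m$-fold join decomposition; as written, the inductive step is exactly the unproved content of the lemma.
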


In this section, we prove another variation of Gromov’s lemma involving dynamical systems. The following lemma is a crucial ingredient of the proof of Theorem \ref{main thm}.
\begin{lem}\label{lem:key}
Let $(X,T)$ be a topological dynamical system with metric $d$. For any $\epsilon>0$ and any natural numbers $n,m$ there exists a continuous map $F : X \to [0, 1]^{m-1}$ such that for every point $p \in [0, 1]^{m-1}$ we have
$$
\wdim_\epsilon(F^{-1}(p), d_n)\le \frac{\wdim_{\epsilon/2}(X, d_n)}{m}.
$$
\end{lem}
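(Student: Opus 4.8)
The plan is to reduce the statement to Tsukamoto's Lemma~\ref{lem:masaki} by pre-composing the map it produces with a map witnessing the $\epsilon/2$-width of $(X,d_n)$. Put $D:=\wdim_{\epsilon/2}(X,d_n)$. By the definition of width there are a simplicial complex $K$ with $\dim K=D$ and a continuous $(d_n,\epsilon/2)$-injective map $g\colon X\to K$, i.e.\ $\diam(g^{-1}(z))<\epsilon/2$ in $d_n$ for every $z\in K$. Since $X$ is compact we may replace $K$ by a finite subcomplex containing $g(X)$; then $K$ is a compact metric space and we fix a compatible metric $\rho$ on it. The map we shall exhibit has the form $F:=\Psi\circ g\colon X\to[0,1]^{m-1}$ for a suitable $\Psi\colon K\to[0,1]^{m-1}$, and it remains to choose $\Psi$ and to control the fibres of $F$.

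The one step that genuinely needs an argument is that $g$ pulls back sufficiently small subsets of $K$ to $d_n$-small subsets of $X$. First, for each $z\in K$ there is an open neighbourhood $N_z\ni z$ with $\diam(g^{-1}(N_z))<\epsilon$ in $d_n$. Indeed, if not, then for every $k\in\N$ one has $\diam\bigl(g^{-1}(B_\rho(z,1/k))\bigr)\ge\epsilon$, so we may pick $x_k,x_k'\in g^{-1}\bigl(B_\rho(z,1/k)\bigr)$ with $d_n(x_k,x_k')>\epsilon/2$; by compactness of $X$ pass to a subsequence with $x_k\to x$ and $x_k'\to x'$, and since $\rho(g(x_k),z)<1/k$ and $\rho(g(x_k'),z)<1/k$, continuity of $g$ forces $g(x)=g(x')=z$, while $d_n(x,x')\ge\epsilon/2$; this contradicts $\diam(g^{-1}(z))<\epsilon/2$. (In other words, this is just upper semicontinuity of the fibre map $z\mapsto g^{-1}(z)$, valid for a continuous map on a compact space.) Now $\{N_z\}_{z\in K}$ is an open cover of the compact metric space $(K,\rho)$, so it admits a Lebesgue number $\lambda>0$: every $A\subseteq K$ of $\rho$-diameter less than $\lambda$ is contained in some $N_z$, hence $\diam(g^{-1}(A))<\epsilon$ in $d_n$.

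Now apply Lemma~\ref{lem:masaki} to $K$ with the metric $\rho$, with $\lambda$ playing the role of $\epsilon$ and with the given $m$: it produces a continuous $\Psi\colon K\to[0,1]^{m-1}$ with $\wdim_{\lambda}(\Psi^{-1}(p),\rho)\le \dim K/m=D/m$ for every $p\in[0,1]^{m-1}$. Set $F:=\Psi\circ g$. Fix $p$; then $F^{-1}(p)=g^{-1}(\Psi^{-1}(p))$, and by the choice of $\Psi$ there are a simplicial complex $Q_p$ with $\dim Q_p\le D/m$ and a continuous $(\rho,\lambda)$-injective map $\chi_p\colon\Psi^{-1}(p)\to Q_p$. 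Since $g\bigl(F^{-1}(p)\bigr)\subseteq\Psi^{-1}(p)$, the map $\chi_p\circ g\colon F^{-1}(p)\to Q_p$ is well defined and continuous, and its fibre over $q\in Q_p$ equals $g^{-1}\bigl(\chi_p^{-1}(q)\bigr)$. Since $\chi_p^{-1}(q)$ has $\rho$-diameter less than $\lambda$, the previous paragraph gives $\diam(g^{-1}(\chi_p^{-1}(q)))<\epsilon$ in $d_n$. Hence $\chi_p\circ g$ is a continuous $(d_n,\epsilon)$-injective map from $F^{-1}(p)$ into a complex of dimension at most $D/m$, so $\wdim_{\epsilon}(F^{-1}(p),d_n)\le D/m=\wdim_{\epsilon/2}(X,d_n)/m$, as required.

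The crux is the middle paragraph: upgrading the pointwise smallness of the fibres of $g$ to the uniform assertion that $\rho$-small subsets of $K$ have $d_n$-small $g$-preimages, via upper semicontinuity of fibres and a Lebesgue number; the only technical proviso is that $K$ must be taken compact, which is why we pass at the outset to a finite subcomplex. The outer composition with Lemma~\ref{lem:masaki} and the bookkeeping of dimensions are then routine.
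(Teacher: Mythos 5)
Your proposal is correct and follows essentially the same route as the paper: embed $X$ into a simplicial complex $K$ via a $(d_n,\epsilon/2)$-injective map, extract a uniform threshold $\lambda$ (the paper's $\delta$) so that $\rho$-small sets in $K$ pull back to $d_n$-sets of diameter less than $\epsilon$, apply Lemma~\ref{lem:masaki} to $K$ at scale $\lambda$, and compose. Your middle paragraph merely supplies the compactness/Lebesgue-number justification that the paper asserts in one line, which is a welcome extra detail but not a different argument.
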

\begin{proof}
By definition of $\epsilon/2$-width dimension, there is an $(\epsilon/2, d_n)$-embedding $f_n: X \to K_n$ where $K_n$ is a simplicial complex with topological dimension $\wdim_{\epsilon/2}(X, d_n)$. Pick a metric $D$ on $K_n$. Then there is an $\delta>0$ such that $D(f_n(x),f_n(y))<\delta$ implies $d_n(x,y)<\epsilon$. By Lemma \ref{lem:masaki}, there exists a continuous function $G_n:K_n \to [0,1]^{m-1}$ such that 
for every point $p \in [0, 1]^{m-1}$ we have
$$
\wdim_\delta(G_n^{-1}(p), D)\le \frac{\wdim_{\epsilon/2}(X, d_n)}{m}.
$$
This means that for each $p \in [0, 1]^{m-1}$ there is an $(\delta, D)$-embedding $g_n$ from $ G_n^{-1}(p)$ to a simplicial complex $E_{n,p}$ with $\dim(E_{n,p})\le \frac{\wdim_{\epsilon/2}(X, d_n)}{m}.$ It follows that $$g_n \circ f_n: f_n^{-1}(G_n^{-1}(p)) \to E_{n,p} $$
is an $(\epsilon, d_n)$-embedding. Thus we have 
$$
\wdim_\epsilon(f_n^{-1}(G_n^{-1}(p)), d_n)\le \dim(E_{n,p})\le \frac{\wdim_{\epsilon/2}(X, d_n)}{m}.
$$
Then we conclude that the continuous funcion $F=G_n \circ f_n: X\to [0, 1]^{m-1}$ is desired.
\end{proof}

\section{Lowering mean dimension of systems with marker property}\label{sec:lowering}

In this section, we prove our main result in current paper. The following proposition is the key point towards Theorem \ref{main thm}.

\begin{prop}\label{main prop}
Let $(X,T)$ be a dynamical system with marker property. Let $z\not=z'\in X$. For any $\epsilon, \delta>0$ there exist a dynamical system $(Y, S)$ and a factor map $\pi: (X,T) \to (Y,S)$ such that 
$$
\pi(z)\not=\pi(z'), ~\mdim(Y,S)<\delta $$ and $$\lim_{n\to \infty} \frac{\sup_{y\in Y}\wdim_\epsilon(\pi^{-1}(y), d_n)}{n}<\delta.
$$
\end{prop}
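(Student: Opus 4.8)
The plan is to feed the dynamical tiling of Proposition~\ref{prop:tiling 2} into the width--reduction Lemma~\ref{lem:key}, block by block, and to read off the factor from the resulting data. Fix $\epsilon,\delta>0$ and set $C:=\wdim_{\epsilon/2}(X,d_1)$, which is finite since $X$ is compact; by subadditivity of $n\mapsto\wdim_{\epsilon/2}(X,d_n)$ one has $\wdim_{\epsilon/2}(X,d_\ell)\le \ell C$ for all $\ell\in\N$. Choose $m\in\N$ with $C/m<\delta/4$, then $\ell_0\in\N$ large (how large is dictated by the estimate of $\mdim(Z,S)$ below; any $\ell_0\gg m/\delta$ works), and $\delta'>0$ with $C\delta'<\delta/8$ and $\delta'<\delta/4$. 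Apply Proposition~\ref{prop:tiling 2} with $r$ replaced by $r_0:=\max\{r,\ell_0\}$ and with this $\delta'$: this yields $K>0$, a $\Z$-equivariant continuous map $\Phi:(X,T)\to([0,2]^\Z,\sigma)$ with $\mdim(\Phi(X),\sigma)<\delta'$ and $\Phi(z)\neq\Phi(z')$, and a dynamical tiling $\mathcal W=\{\mathcal W_x\}$ with $D(\partial_{r_0}\mathcal W_x)<\delta'$ for every $x$ (uniformly in $x$, as the construction shows). By Lemma~\ref{lem:gut}(2) every tile has length at most $L_{\max}:=2M_1+3$, a fixed constant. Finally, for each integer $1\le\ell\le L_{\max}$ use Lemma~\ref{lem:key} with this $\ell$ and the chosen $m$ to fix a continuous map $F_\ell:X\to[0,1]^{m-1}$ with $\wdim_\epsilon(F_\ell^{-1}(p),d_\ell)\le \wdim_{\epsilon/2}(X,d_\ell)/m\le \ell C/m$ for all $p$.

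Adopt the half-open convention so that each integer lies in exactly one tile, and call a tile \emph{large} if it contains at least $\ell_0$ integers. For a large tile $W=W(x,n)$ put $\ell(W)=\#(W\cap\Z)$ and let $a^{*}(W)$ be its least integer. The factor map is $\pi=(\Phi,\zeta):X\to Y:=\pi(X)$, with $S$ the shift, where $\zeta(x)\in([0,1]^{m-1})^\Z$ records, for each large tile $W$ of $\mathcal W_x$, the vector $F_{\ell(W)}(T^{a^{*}(W)}x)$ on the integer positions in the interior of $W$, and records $0$ elsewhere; near tile boundaries and at the parameter values where the combinatorics ($\ell$, $a^{*}$, or largeness) of a tile changes, this prescription is modified by continuous cutoffs and interpolation (see the last paragraph). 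By construction $\pi$ is $\Z$-equivariant, and $\pi(z)\neq\pi(z')$ since already $\Phi(z)\neq\Phi(z')$.

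Granting that $\pi$ is continuous, here are the estimates. A window of length $n$ meets tiles whose numbers of integers sum to $n+O(1)$, hence meets at most $n/\ell_0+O(1)$ large tiles; since each large tile puts only $O(m)$ nonzero coordinates into $\zeta$, the window carries $O(mn/\ell_0)$ coordinates of $Z:=\zeta(X)$, so $\mdim(Z,S)=O(m/\ell_0)<\delta/4$ for $\ell_0$ large enough, and thus $\mdim(Y,S)\le\mdim(\Phi(X),\sigma)+\mdim(Z,S)<\delta$. For the fibres, fix $y$ and $x,x'\in\pi^{-1}(y)$: they carry the same tiling and the same $\zeta$, so on every large tile $W$ one has $F_{\ell(W)}(T^{a^{*}(W)}x)=F_{\ell(W)}(T^{a^{*}(W)}x')=:p$, i.e. $T^{a^{*}(W)}x,T^{a^{*}(W)}x'\in F_{\ell(W)}^{-1}(p)$. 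Post-composing $T^{a^{*}(W)}$ with the $(\epsilon,d_{\ell(W)})$-injective map of $F_{\ell(W)}^{-1}(p)$ into a simplicial complex of dimension $\le\ell(W)C/m$ furnished by Lemma~\ref{lem:key}, for every large tile meeting $[0,n)$, and using the trivial estimate $\wdim_\epsilon(X,d_\ell)\le \ell C$ on the small tiles meeting $[0,n)$, we obtain an $(\epsilon,d_n)$-injective continuous map of $\pi^{-1}(y)$ into a product of simplicial complexes of total dimension at most
\[
\frac{C}{m}\sum_{W\ \mathrm{large},\ W\cap[0,n)\neq\emptyset}\ell(W)\ +\ C\sum_{W\ \mathrm{small},\ W\cap[0,n)\neq\emptyset}\#(W\cap\Z).
\]
The first term is $\le \frac{C}{m}(n+O(1))$. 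A tile with fewer than $\ell_0$ integers is contained in $\partial_{\ell_0}\mathcal W_x\subset\partial_{r_0}\mathcal W_x$, which has density $<\delta'$, so the small tiles meeting $[0,n)$ have total length $O(\delta' n)$ and the second term is $O(C\delta' n)$. Hence $\wdim_\epsilon(\pi^{-1}(y),d_n)\le (C/m+O(C\delta'))\,n+O(1)$ uniformly in $y$, and dividing by $n$ and letting $n\to\infty$ yields a value $\le C/m+O(C\delta')<\delta$ by the choices of $m$ and $\delta'$, which is the remaining assertion.

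The crux is the continuity of $\pi$. The maps $F_\ell$ depend on the combinatorial length $\ell$ and the base integer $a^{*}$ of a tile, and although the tiles of $\mathcal W_x$ vary continuously with $x$ in the Hausdorff topology, these integers jump: when a tile boundary crosses an integer, when a short tile is absorbed into or split off a neighbour, and when a tile gains or loses its $\ell_0$-th integer. This is handled exactly as in the embedding constructions of \cite{GutLinTsu15} and \cite{gutman2020embedding}. Using the continuity of $x\mapsto\dist(k,\partial\mathcal W_x)$, one multiplies the recorded vectors by continuous cutoffs vanishing on a bounded collar of the tile boundaries, and on those collars interpolates linearly between the data of adjacent tiles and between $F_\ell$ and $F_{\ell\pm1}$, recording the several interpolated pieces as separate coordinates so that each $F_{\ell(W)}(T^{a^{*}(W)}x)$ remains reconstructible on every fibre. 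All these modifications live inside $\partial_{r_0}\mathcal W_x$, which has density $<\delta'$, so they add only $O(\delta')$ to $\mdim(Z,S)$ and $O(C\delta' n)$ to the fibre estimate, and the conclusions above are unaffected. Organizing this bookkeeping---in particular fixing a consistent labelling of tiles so that the interpolations match across the jump values of the combinatorics---is the main technical burden of the proof, but it is routine given the cited constructions.
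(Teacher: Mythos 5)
Your overall architecture (tiling from Proposition~\ref{prop:tiling 2} plus the width--reduction Lemma~\ref{lem:key}, factor map $\pi=(\Phi,\zeta)$, small density of $\partial\mathcal W_x$ controlling both $\mdim$ of the new coordinate and the exceptional tiles) is in the right spirit, but the proposal has two genuine gaps, and they sit exactly where the paper's construction does its real work. First, the continuity of $\zeta$ is not a routine afterthought: your recorded datum on a tile is $F_{\ell(W)}(T^{a^{*}(W)}x)$, and both $\ell(W)$ and $a^{*}(W)$ jump when an endpoint of $W$ crosses an integer. At such a jump the recorded values change discontinuously at \emph{every} integer position of the tile, not only near $\partial W$, so your assertion that ``all these modifications live inside $\partial_{r_0}\mathcal W_x$'' is false, and a cutoff vanishing on a boundary collar does not repair it. The proposed fix (interpolating between $F_\ell$ and $F_{\ell\pm1}$ and recording the pieces as extra coordinates) is in tension with your fibre estimate, which needs the \emph{exact} value $F_{\ell(W)}(T^{a^{*}(W)}x)$ to be reconstructible from the factor on every tile; at the transition parameters the interpolation weight of one candidate vanishes and that reconstructibility is lost precisely where it is claimed. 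This is the difficulty the paper's proof is engineered to avoid: it uses a single scale $n$ and a single map $F$, anchors the recorded block by the congruence $a\equiv b \bmod (m-1)$ to the \emph{label} $b$ of the tile containing $0$ (which is locally constant while $0$ stays in the interior), and multiplies by $\alpha(\dist(0,\partial W_x))$, so that $g(x)=\alpha(\dist(0,\partial W_x))\,F(T^{a}x)_{-a}$ is continuous with no interpolation or case analysis at all.

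Second, your fibre argument begins ``fix $y$ and $x,x'\in\pi^{-1}(y)$: they carry the same tiling,'' but Proposition~\ref{prop:tiling 2} does not assert that $\Phi(x)=\Phi(x')$ implies $\mathcal W_x=\mathcal W_{x'}$ (that recoverability property is stated only for the appendix variant, Proposition~\ref{prop:tiling 22}); without it your tile-by-tile product map is not well defined from the data of $y$ alone, so the covering of $[0,n)$ by ``the'' large and small tiles of the fibre has no meaning. The paper again sidesteps this: it never needs the whole tiling to be determined by the factor, only property (3) of Proposition~\ref{prop:tiling 2} -- for every $x$ there is one fully recorded block $[a,a+m-2]$ with $|a|\le K$ -- which lets it cover $\pi^{-1}(y,z)$ by finitely many sets of the form $\{x:\ (g(T^tx))_{a\le t\le a+m-2}=F(T^ax)\ \text{with the prescribed values}\}$, $a\in[-K,K]$, and then conclude via the single-scale bound $\wdim_\epsilon(\pi^{-1}(y,z),d_n)\le \wdim_{\epsilon/2}(X,d_n)/(m)$ together with the fact that the limit in $N$ is an infimum. (Smaller issues -- the ``$O(m)$ nonzero coordinates per large tile'' count, and the use of the density bound uniformly in $x$ and over the window $[0,n)$ when only a limsup over symmetric windows is stated -- are repairable, but the two points above are where your argument, as written, does not go through.)
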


\begin{proof}[Proof of Theorem \ref{main thm} by assuming Proposition \ref{main prop}]
     Apply Proposition \ref{main prop} to $\epsilon=\frac{1}{n}$ and $\frac{\delta}{2^n}$ for each positive integer $n$. It follows that for each $n$ there is a factor map $\pi_n: (X,T) \to (Y_n,S_n)$ such that $\pi_n(z)\not=\pi_n(z')$,
    $$
\mdim(Y_n,S_n)<\frac{\delta}{2^n} \text{ and } \lim_{N\to \infty} \frac{\sup_{y\in Y}\wdim_{\frac{1}{n}}(\pi^{-1}(y_n), d_N)}{N}<\frac{\delta}{2^n}.
$$
    Let 
    $$
    \pi:=\prod_{n\ge 1} \pi_n : X \to \prod_{n\ge 1} Y_n, S=\prod_{n\ge 1} S_n \text{ and } Y=\pi (X).
    $$
    Obviously we have $\pi(z)\not=\pi(z')$.
    It is sufficient to show that the factor map $\pi: (X,T) \to (Y,S)$ is what we desire. Firstly, we have 
    $$
    \mdim(Y,S) \le \sum_{n\ge 1} \mdim(Y_n,S_n)<\sum_{n\ge 1}\frac{\delta}{2^n}= \delta.
    $$ 
    Next, since
    $$
    \pi^{-1}(y)=\cap_{n\ge 1} \pi_n^{-1}(y_n) \text{ for } y=(y_n)_{n\ge 1}\in Y,
    $$
     we have
     \begin{align*}
         &\lim_{N\to \infty} \frac{\sup_{y\in Y}\wdim_{\frac{1}{n}}(\pi^{-1}(y), d_N)}{N} \\
         \le & \lim_{N\to \infty} \frac{\sup_{y\in Y}\wdim_{\frac{1}{n}}(\pi^{-1}(y_n), d_N)}{N}<\frac{\delta}{2^n},
     \end{align*}
     for each $n\ge 1$. Finally, we conclude that $$\mdim(\pi, T)=\lim_{n\to \infty}\lim_{N\to \infty} \frac{\sup_{y\in Y}\wdim_{\frac{1}{n}}(\pi^{-1}(y), d_N)}{N}=0.$$
     
\end{proof}

In the rest of this section, we prove Proposition \ref{main prop}. 
\begin{proof}[Proof of Proposition \ref{main prop}]
Let $n$ be a positive integer such that 
$$
\frac{\wdim_{\epsilon/2}(X, d_n)}{n}< \mdim_{\epsilon/2}(X, T, d)+1
$$
Let $m$ be a positive integer such that 
$$m>\frac{\mdim_{\epsilon/2}(X, T, d)+1}{\delta}.$$

  Let $\delta'< \frac{\mdim_{\epsilon/2}(X, T, d)+1}{2m}$ which is smaller than $\delta/2$.  By Proposition \ref{prop:tiling 2}, there exists $K>0$, a $\Z$-equivalent continuous map $\Phi: (X,T) \to ([0,2]^\Z, \sigma)$  and a dynamical tiling $\mathcal{W}$ satisfying that 
\begin{itemize}
\item $\mdim(\Phi(X), \sigma)<\delta'$;
    \item $D(\partial_{3m} \mathcal{W}_x )<\delta'$ for all $x\in X$;
    \item For each $x\in X$, there exists $l$ such that $W(x,l) \setminus \partial_{3m} W(x,l)\not=\emptyset$ and $W(x,l)\subset [-K, K]$;
    \item $\Phi(z)\not=\Phi(z')$.
\end{itemize}
Denote the image $\Phi(X)$ by $Z$. Then $\mdim(Z, \sigma)< \delta'<\delta/2$.  

By Lemma \ref{lem:key}, there exists a continuous map $F : X \to [0, 1]^{m-1}$ such that for every point $p \in [0, 1]^{m-1}$ we have
$$
\wdim_\epsilon(F^{-1}(p), d_n)\le \frac{\wdim_{\epsilon/2}(X, d_n)}{m}.
$$

Let $\alpha: [0, +\infty) \to [0,1]$ be a continuous map  such that 
    \begin{itemize}
        \item $\alpha(0)=0$;
        \item $\alpha(t)=0$ for $t\ge 3m$;
        \item $\alpha(t)=1$ for $1\le t\le 2m$.
    \end{itemize}
Now we define a continuous function $g: X\to [0,1]$ as follows. Take $x\in X$. If $0\in \partial W_x$, then $g(x)=0$, where  $\partial W_x= \cup_{n\in \Z} \partial W(x,n)$. Otherwise, there is a unique $b\in \Z$ such that $0\in \Int  W(x,b)$. Choosing $a\in \Z$ satisfying $a\equiv b \mod m-1$ and $0\in [a, a+m-2]$, we set 
$$
g(x)=\alpha(\dist(0, \partial W_x)) F(T^a(x))_{-a}.
$$
where we identify $[0,1]^{m-1}=[0,1]^{\{0,1,\dots, m-2\}}$.   Since $\mathcal{W}_x$ depends continuously on $x$, the function $g$ is continuous. Let 
$$
I_g: X \to [0,1]^\Z, x\mapsto (g(T^t(x)))_{t\in \Z},
$$
be a continuous map. 
Denote $I_g(X)$ by $Y$. Then $I_g$ is a factor map from $(X,T)$ to $(Y, \sigma)$. Let 
$$
\pi: (X,T) \to (Y,\sigma)\times (Z, \sigma), x\mapsto (I_g(x), \Phi(x)).
$$
We will show $\pi$ is what we desire.

For each $x\in X$,  since there exists $l$ such that $W(x,l) \setminus \partial_{3m} W(x,l)\not=\emptyset$ and $W(x,l)\subset [-K, K]$, we can find $a\equiv n \mod m-1$ such that $[a,a+m-2]\subset \Int W(x,l)$ and $$1\le \min \{ \dist(a, \partial W_x), \dist(a+m-2, \partial W_x) \}\le 2m.$$ It follows that $|a|\le K$.
By Lemma \ref{lem:11}, we have 
$$
(g(T^t(x)))_{a\le t\le a+m-1}=F(T^a(x)),
$$
for all $x\in X$ and $a$ which depends on $x$ as above.
It follows that 
$$
\pi^{-1}(y, z)\subset \cup_{a\in [-K,K]} F^{-1}(y_a, \dots, y_{a+m-2}) 
$$
and consequently
$$
\wdim_\epsilon(\pi^{-1}(y, z), d_{n})\le \frac{\wdim_{\epsilon/2}(X, d_n)}{m},
$$
for all $(y, z)\in Y\times Z$. Then we have 
\begin{align*}
    &\lim_{N\to \infty} \frac{\sup_{(y, z)\in Y\times Z}\wdim_\epsilon(\pi^{-1}(y, z), d_N)}{N}\\
    =&\inf_{N\in \N} \frac{\sup_{(y, z)\in Y\times Z}\wdim_\epsilon(\pi^{-1}(y, z), d_N)}{N}\\
    \le &\frac{\sup_{(y, z)\in Y\times Z}\wdim_\epsilon(\pi^{-1}(y, z), d_{n})}{{n}}\\
    \le &\frac{\wdim_{\epsilon/2}(X, d_n)}{nm}\\
    \le &\frac{\mdim_{\epsilon/2}(X, T, d)+1}{m}
    <\delta.
\end{align*}
Since $\mdim(Z,\sigma)<\delta/2$, it remains to show that $\mdim(Y,\sigma)\le \delta/2$. Let $P_N: [0,1]^\Z \to [0,1]^N$ the projection to the $0, 1, 2, \cdots, (N-1)$-th coordinates. By Lemma \ref{lem:12}, $P_N(Y)$ is contained in 
$$
E_N=\{ e\in [0,1]^N: e_n =0 \text{ except for at most } \delta'N+1 \text{ entries} \},
$$
for $N$ large enough. Since $\dim(E_N)\le \delta'N+1$, we conclude that
$$
\mdim(Y,\sigma)\le \limsup_{N\to \infty} \frac{P_N(Y)}{N} \le \delta'<\delta/2.
$$
\end{proof}

It remains to prove the following lemmas which is used in the proof of Proposition \ref{main prop}.
\begin{lem}\label{lem:11}
Let $a,b\in \Z$ with $a\equiv b \mod m-1$. If $[a,a+m-2]\subset \Int W(x,b)$ and $1\le \min \{ \dist(a, \partial W_x), \dist(a+m-2, \partial W_x) \}\le 2m$, then 
$$
(g(T^t(x)))_{a\le t\le a+m-2}=F(T^a(x)).
$$
\end{lem}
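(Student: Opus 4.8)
The plan is to evaluate $g$ along the orbit segment $T^a x,\,T^{a+1}x,\dots,T^{a+m-2}x$ by unwinding the definition of $g$ at each point, checking that the reindexing shift built into $g$ always cancels the time shift and returns the corresponding coordinate of $F(T^a x)$, and that the cut-off factor $\alpha$ is identically $1$ on the range of distances that actually occurs. The whole statement is thus a bookkeeping identity once the equivariance of the dynamical tiling is in hand, together with one genuine estimate on $\dist(\cdot,\partial W_x)$.

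First I would record the tiling equivariance in the forms needed: from $W(T^s x,k)=-s+W(x,s+k)$ one gets $\Int W(T^s x,k)=-s+\Int W(x,s+k)$ and $\partial W_{T^s x}=\partial W_x-s$ for all $s,k\in\Z$, and I would use the standard property of a convex tiling that $\Int W(x,b)$ is disjoint from $\partial W_x$. Now fix an integer $t$ with $a\le t\le a+m-2$. Since $[a,a+m-2]\subset\Int W(x,b)$ we have $t\in\Int W(x,b)$, hence $0\in\Int W(T^t x,b-t)$ and $0\notin\partial W_{T^t x}$, so $g(T^t x)$ is given by the second clause of its definition, with the role of ``$b$'' played by $b-t$ and the role of ``$a$'' played by the unique integer $a'$ with $a'\equiv b-t\pmod{m-1}$ and $0\in[a',a'+m-2]$. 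I claim $a'=a-t$: indeed $a-t\equiv b-t\pmod{m-1}$ because $a\equiv b\pmod{m-1}$, and $0\in[a-t,a-t+m-2]$ is precisely the hypothesis $a\le t\le a+m-2$; since among the $m-1$ admissible left endpoints $\{-(m-2),\dots,0\}$ exactly one lies in any prescribed residue class mod $m-1$, this $a'$ is unique and equals $a-t$. Consequently, using $\dist(0,\partial W_{T^t x})=\dist(0,\partial W_x-t)=\dist(t,\partial W_x)$, $T^{a-t}T^t=T^a$, and the identification $[0,1]^{m-1}=[0,1]^{\{0,\dots,m-2\}}$ (note $t-a\in\{0,\dots,m-2\}$),
$$g(T^t x)=\alpha\big(\dist(t,\partial W_x)\big)\, F\big(T^a x\big)_{t-a}.$$

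It remains to check $\alpha(\dist(t,\partial W_x))=1$ for every such $t$, i.e. that $\dist(t,\partial W_x)\in[1,2m]$, where $\alpha\equiv1$. On the interval $\Int W(x,b)$ the function $p\mapsto\dist(p,\partial W_x)$ is the minimum of the two affine maps $p\mapsto p-(\text{left endpoint of }W(x,b))$ and $p\mapsto(\text{right endpoint of }W(x,b))-p$, hence concave on $[a,a+m-2]$; together with $\min\{\dist(a,\partial W_x),\dist(a+m-2,\partial W_x)\}\ge1$ this already gives $\dist(t,\partial W_x)\ge1$. For the upper bound I would combine the hypothesis $\min\{\dist(a,\partial W_x),\dist(a+m-2,\partial W_x)\}\le2m$ with the $1$-Lipschitz property of $\dist(\cdot,\partial W_x)$ and the fact that, for the $a$ produced in the proof of Proposition~\ref{main prop}, the whole block $[a,a+m-2]$ lies on a single monotone (slope $\pm1$) branch of this tent function — it is wedged against one endpoint of the long tile $W(x,b)$, far from the tile's midpoint — so an intermediate integer value is squeezed between the two endpoint values; hence $\dist(t,\partial W_x)\le2m$. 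Therefore $\alpha(\dist(t,\partial W_x))=1$ and $g(T^t x)=F(T^a x)_{t-a}$ for $a\le t\le a+m-2$, which is the asserted identity $(g(T^t(x)))_{a\le t\le a+m-2}=F(T^a(x))$.

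The main obstacle is exactly this last estimate: keeping $\dist(t,\partial W_x)$ inside $[1,2m]$, where the cut-off $\alpha$ is inactive, so that $g$ reproduces $F(T^a x)$ coordinatewise rather than a damped version of it. This is precisely what the distance conditions imposed on $a$ are for, and it depends on the precise placement of $[a,a+m-2]$ within the tile $W(x,b)$ (near one endpoint, on one slope of the tent), not merely on $[a,a+m-2]\subset\Int W(x,b)$. Everything preceding it — the vanishing of $0$ against $\partial W_{T^t x}$, the identification $a'=a-t$, and the cancellation of the shift — is routine once the equivariance of the tiling and the arithmetic modulo $m-1$ are set up carefully.
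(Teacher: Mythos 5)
Your argument follows the same route as the paper's own (very terse) proof: use equivariance of the tiling to see that for each integer $t\in[a,a+m-2]$ one has $0\in\Int W(T^tx,b-t)$, check via the congruence mod $m-1$ that the block chosen in the definition of $g$ at the point $T^tx$ starts at $a-t$, and conclude $g(T^tx)=\alpha\big(\dist(t,\partial W_x)\big)F(T^ax)_{t-a}$, so that the identity reduces to $\alpha(\dist(t,\partial W_x))=1$ for all $t$ in the block. Your identification $a'=a-t$ and your lower bound $\dist(t,\partial W_x)\ge 1$ (concavity of the tent function on the tile) are correct and are carried out in more detail than in the paper, whose proof simply asserts $1\le\dist(0,\partial W_{T^tx})\le 2m$ without comment.

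The genuine gap is the upper bound, and you have in fact located a soft spot of the lemma itself rather than merely of your own write-up. From the stated hypothesis $1\le\min\{\dist(a,\partial W_x),\dist(a+m-2,\partial W_x)\}\le 2m$ the $1$-Lipschitz property only gives $\dist(t,\partial W_x)\le 2m+(m-2)<3m$ on the block, which does not force $\alpha=1$: for a tile $[0,10m]$ and $a=2m$ all hypotheses hold, yet $\dist(a+m-2,\partial W_x)=3m-2>2m$. Your proposed repair --- that for the $a$ produced in Proposition \ref{main prop} the block lies on a single monotone branch, so interior values are squeezed between the endpoint values --- only yields $\dist(t,\partial W_x)\le\max\{\dist(a,\partial W_x),\dist(a+m-2,\partial W_x)\}$, while the hypothesis controls the minimum; and even requiring both endpoint distances to lie in $[1,2m]$ is insufficient when the tile's midpoint falls inside the block (tile $[0,5m-2]$, block $[2m,3m-2]$). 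The clean fix is to strengthen the hypothesis (equivalently, the choice of $a$ in the proof of Proposition \ref{main prop}) to $1\le\dist(t,\partial W_x)\le 2m$ for every $t\in[a,a+m-2]$; this choice is available because $W(x,l)\setminus\partial_{3m}W(x,l)\ne\emptyset$ forces the tile to have length $>6m$, and admissible starting points $a\equiv b \pmod{m-1}$ are spaced $m-1$ apart, so the whole block can be placed inside $[p+1,p+2m]$ with $p$ the nearer endpoint of the tile; with that hypothesis your computation closes the proof. Since the paper's one-line proof asserts the two-sided bound without justification, your attempt is no weaker than the original on this point, but as written neither derives the estimate from the lemma's literal hypotheses.
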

\begin{proof}
Take $a\le t\le a+m-2$. Then $W(T^tx,n-t)=-t+W(x,n)$ and consequently 
$$
1\le \dist(0, \partial W_{T^tx})\le 2m.
$$
It follows that 
$$
g(T^tx)=F(T^{a-t}T^{t}x)_{-a+t}=F(T^ax)_{-a+t}.
$$
\end{proof}

\begin{lem}\label{lem:12}
If a integer $a\notin \partial_{3m} W_x$, then $g(T^a(x))=0$.
\end{lem}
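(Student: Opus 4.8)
The plan is to reduce the statement to the single structural feature of the definition of $g$: its defining formula always carries the prefactor $\alpha(\dist(0,\partial W_x))$, and this $\alpha$ vanishes on $[3m,+\infty)$. Together with the $\Z$-equivariance of the dynamical tiling, this will make the proof a short bookkeeping argument.

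First I would transport the hypothesis to the translated point $T^ax$. Since $W(T^ax,k)=-a+W(x,a+k)$ for every $k\in\Z$, the set of tile endpoints satisfies $\partial W_{T^ax}=-a+\partial W_x$, and hence $\dist(0,\partial W_{T^ax})=\dist(a,\partial W_x)$. Next I would unwind what $a\notin\partial_{3m}W_x$ means: writing a nonempty tile as $W(x,k)=[c_k,d_k]$, one has $\partial_{3m}W(x,k)=[c_k-3m,c_k+3m]\cup[d_k-3m,d_k+3m]$, so $a\notin\partial_{3m}W(x,k)$ is precisely $|a-c_k|>3m$ and $|a-d_k|>3m$; since this holds for every $k$, we obtain $\dist(a,\partial W_x)\ge 3m$. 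In particular $\dist(0,\partial W_{T^ax})\ge 3m>0$, so $0\notin\partial W_{T^ax}$, which means $g(T^ax)$ is computed by the second branch of its definition, $g(T^ax)=\alpha(\dist(0,\partial W_{T^ax}))\,F(T^{j}(T^ax))_{-j}$ for a suitable index $j\in\Z$ (the one determined by the tile of $\mathcal{W}_{T^ax}$ containing $0$ through the congruence condition mod $m-1$).

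Finally, since $\dist(0,\partial W_{T^ax})=\dist(a,\partial W_x)\ge 3m$ and $\alpha(t)=0$ for $t\ge 3m$, the prefactor is $0$, so $g(T^ax)=0$ irrespective of the value of $F$ and of the choice of $j$. I do not anticipate any genuine obstacle here: the only mild points to be careful about are that taking the infimum over $k$ yields only the non-strict bound $\dist(a,\partial W_x)\ge 3m$ (which is still enough, as $\alpha$ already vanishes at $3m$), and that one must use the function $\alpha$ from the proof of Proposition~\ref{main prop} (the one with $\alpha(t)=0$ for $t\ge 3m$), not the function also denoted $\alpha$ introduced in Section~\ref{sec:dynamical tiling}.
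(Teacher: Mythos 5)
Your proof is correct and follows essentially the same route as the paper's: translate the hypothesis via the equivariance $W(T^ax,k)=-a+W(x,a+k)$ to get $\dist(0,\partial W_{T^ax})=\dist(a,\partial W_x)\ge 3m$, and then invoke $\alpha(t)=0$ for $t\ge 3m$ to kill the prefactor in the definition of $g$. Your added care about the non-strict bound and about which of the two functions named $\alpha$ is in play is sound but does not change the argument.
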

\begin{proof}
Since $W(T^ax,n-a)=-a+W(x,n)$ and consequently 
$$
\dist(0, \partial W_{T^ax})> 3m.
$$
We conclude that $g(T^ax)=0$.
\end{proof}

\section{Remarks and problems}\label{sec:remark}

It is conjectured that one can drop the marker property in Theorem \ref{L thm}. Similarly, we propose the following conjecture.
\begin{conj}\label{conj:1}
Let $(X,T)$ be a dynamical system. Let $z\not=z'\in X$. Then for any $\delta>0$ there exists a factor $(Y,S)$ of $(X,T)$ via $\pi$ such that 
$$
\pi(z)\not=\pi(z'), \mdim{(Y,S)}<\delta \text{ and } \mdim{(\pi, T)}=0.
$$
\end{conj}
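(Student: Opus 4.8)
We indicate a possible line of attack on Conjecture \ref{conj:1}, taking Theorem \ref{main thm} as the starting point. Two cases are already settled. If $\mdim(X,T)=0$ the conclusion is trivial: take $(Y,S)=(X,T)$ and $\pi=\mathrm{id}$, so that $\mdim(Y,S)=0<\delta$, $\mdim(\pi,T)=0$, and $\pi$ separates all points. If $(X,T)$ has the marker property the conclusion is precisely Theorem \ref{main thm}. Hence the new content lies entirely in the case $\mdim(X,T)>0$ together with failure of the marker property; the prototypical example to keep in mind is the full shift $([0,1]^{\Z},\sigma)$, which has mean dimension $1$ and yet is not even aperiodic.

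The plan is to peel off the locus responsible for the failure of the marker property and run the constructions of Sections \ref{sec:dynamical tiling} and \ref{sec:lowering} relative to it. I would begin with the periodic points: for each $N$ the set $P_N=\bigcup_{0<k\le N}\mathrm{Fix}(T^k)$ is closed and $T$-invariant, and $\mdim(P_N,T)=0$, since $\mdim(\mathrm{Fix}(T^k),T)=\tfrac1k\mdim(\mathrm{Fix}(T^k),T^k)=\tfrac1k\mdim(\mathrm{Fix}(T^k),\mathrm{id})=0$ and the mean dimension of a finite union of closed invariant sets is the maximum of the pieces. Away from $P_N$ every point admits an ``$N$-marker,'' though without the uniformity the marker property supplies. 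Over the part of $X$ that stays $T$-separated from $P_N$ for a long time I would build a partial dynamical tiling in the spirit of Proposition \ref{prop:tiling 2}, together with the fibre-reducing map of Lemma \ref{lem:key}; over the complementary part, where the mean dimension should be small, I would use a low-complexity factor (in the extreme case, the identity on $P_N$ itself); then I would glue the two factors into one. Separation of $z$ from $z'$ would be arranged exactly as in Section \ref{sec:dynamical tiling}, by choosing the auxiliary marker function $\phi$ supported in a small neighbourhood of $z$ that misses $z'$.

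The hard part will be that without the marker property there is no \emph{global} dynamical tiling, and the ``bad'' locus need not be a closed invariant set of zero mean dimension: aperiodic systems can already fail the marker property (the examples of \cite{tsukamoto2022g, shi2021marker}), so the obstruction does not reduce to the periodic sets $P_N$. Consequently the transition between the region carrying a good tiling and the region that does not cannot obviously be made both continuous and $\Z$-equivariant while keeping $\mdim(Y,S)$ and the relative mean dimension $\mdim(\pi,T)$ below $\delta$. Resolving this --- presumably by developing a genuinely relative version of the dynamical tiling machinery, or by isolating a weakening of the marker property that holds for \emph{every} dynamical system and still drives the argument of Section \ref{sec:lowering} --- is exactly where the difficulty lies, and is why the statement is only conjectured.
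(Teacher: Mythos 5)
You are addressing Conjecture \ref{conj:1}, which the paper explicitly leaves open: no proof is given there, and what you have written is, by your own account in the closing paragraph, a programme rather than a proof. So the verdict must be that there is a genuine gap --- the gap \emph{is} the conjecture. The pieces you do establish are correct: when $\mdim(X,T)=0$ the identity factor works (since $\mdim(\pi,T)\le\mdim(X,T)=0$); when $(X,T)$ has the marker property the statement is Theorem \ref{main thm} plus that trivial case; and your computation that $P_N=\bigcup_{0<k\le N}\mathrm{Fix}(T^k)$ is closed, invariant and of zero mean dimension is sound, via $\mdim(\mathrm{Fix}(T^k),T)=\tfrac1k\,\mdim(\mathrm{Fix}(T^k),T^k)=0$ and the fact that the mean dimension of a finite union of closed invariant sets is the maximum over the pieces.

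What is missing is exactly what you flag. Every construction in Sections \ref{sec:dynamical tiling} and \ref{sec:lowering} is driven by a globally defined, $\Z$-equivariant, continuously varying convex tiling $\mathcal{W}_x$ of $\R$, and the marker property enters once but essentially: it produces the function $\phi$ with $U\cap T^nU=\emptyset$ for $0<|n|<M$ and $X=\bigcup_{n}T^nU$, from which the Voronoi construction manufactures the tiling with the two-sided tile bound of Lemma \ref{lem:gut}, on which Lemmas \ref{lem0}--\ref{lem3} and all of Section \ref{sec:lowering} rest. Without a uniformly relatively dense and separated marker set, the point configuration $\mathcal{S}(x)$ can degenerate and no such tiling exists globally. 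Your proposed reduction to the periodic locus $P_N$ cannot close this, as you yourself observe, because the known obstructions to the marker property already occur in aperiodic systems where $P_N=\emptyset$; and the gluing of a tiled region to an untiled one is not a step you can currently perform continuously and equivariantly while keeping both $\mdim(Y,S)$ and $\mdim(\pi,T)$ under control. The proposal is therefore a reasonable and accurately self-diagnosed plan of attack, but it does not constitute a proof, and it should not be presented as one.
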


As we mentioned before, Theorem \ref{main thm} tells that for any dynamical system with marker property and positive mean dimension we can find a factor such that it doesn't satisfy \eqref{eq:prob}. We then conjecture that the assumption of marker property can be dropped off. Clearly, Conjecture \ref{conj:1} implies Conjecture \ref{conj:2}.

\begin{conj}\label{conj:2}
Let $(X,T)$ be a dynamical system of finite non-zero mean dimension. Then there exists a factor $(Y,S)$ of $(X,T)$ via $\pi$ such that 
$$
\mdim{(X,T)}> \mdim{(Y,S)}+\mdim{(\pi, T)}.
$$
\end{conj}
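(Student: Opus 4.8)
The plan is to derive Conjecture~\ref{conj:2} from Conjecture~\ref{conj:1}, concentrating the real work in the latter. Given $(X,T)$ with $0<\mdim(X,T)<\infty$, fix any $z\neq z'\in X$ and choose $\delta$ with $0<\delta<\mdim(X,T)$. Applying Conjecture~\ref{conj:1} produces a factor $\pi\colon (X,T)\to(Y,S)$ with $\pi(z)\neq\pi(z')$, $\mdim(Y,S)<\delta$ and $\mdim(\pi,T)=0$, whence $\mdim(Y,S)+\mdim(\pi,T)<\delta<\mdim(X,T)$, which is the asserted strict inequality. (Point separation is not actually needed for this step; only the small mean dimension of the factor and the vanishing of the relative mean dimension are used.) So a proof of Conjecture~\ref{conj:1} yields Conjecture~\ref{conj:2}, and any obstruction to Conjecture~\ref{conj:2} obstructs Conjecture~\ref{conj:1}.

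It thus remains to address Conjecture~\ref{conj:1}, the marker-free analogue of Theorem~\ref{main thm}, and here I would follow the architecture of the present paper. Two ingredients go into the proof of Theorem~\ref{main thm}: the dynamical version of Gromov's lemma (Lemma~\ref{lem:key}), which is purely geometric and assumes nothing about the dynamics, and a dynamical tiling as in Proposition~\ref{prop:tiling 2}. The marker property is consumed in exactly one place, namely the construction of that tiling --- a $\Z$-equivariant, continuously varying family of tilings of $\R$ by uniformly bounded intervals whose $r$-boundary has density $<\delta$, together with the auxiliary low-mean-dimension map $\Phi$ separating $z$ and $z'$. The whole problem therefore reduces to building such a tiling, or a workable substitute, without the marker hypothesis.

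This reduction is where I expect the genuine obstacle to lie, and it is essentially the obstacle that keeps Lindenstrauss's conjectured marker-free version of Theorem~\ref{L thm} open. The existence of bounded-tile dynamical tilings is more or less equivalent to the marker property: for a system whose only minimal subsets are periodic orbits, or for the aperiodic systems without the marker property of \cite{tsukamoto2022g, shi2021marker}, there are no markers at all, so globally bounded tiles are impossible. One would instead have to let the tiles grow as $x$ approaches the almost periodic part of $X$, show that this part carries no mean dimension, and splice in a separate, compatible construction over the recurrent skeleton.

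A more modest starting point, which I would attempt first, is to establish Conjecture~\ref{conj:1} --- hence Conjecture~\ref{conj:2} --- under the extra hypothesis that $(X,T)$ has the small boundary property, or admits a factor with the marker property, running the tiling argument above on the good part; and then to try to reduce the general case to one of these situations by a Rokhlin-tower or perturbation argument localized near the (almost) periodic points. Even this conditional statement would already settle the open problem \eqref{eq:prob} for a natural class of systems well beyond the marker case.
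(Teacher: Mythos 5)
You were asked to prove Conjecture~\ref{conj:2}, but this is stated in the paper as an open conjecture with no proof; the only thing the paper records about it is the one-line observation that Conjecture~\ref{conj:1} implies it, which is precisely the reduction in your first paragraph. That reduction is correct (pick $\delta<\mdim(X,T)$, apply Conjecture~\ref{conj:1}, add the two bounds), but it shifts the entire burden onto Conjecture~\ref{conj:1}, which is equally open. The rest of your proposal is a research plan rather than an argument: you correctly isolate the unique use of the marker property in the construction of the dynamical tiling of Proposition~\ref{prop:tiling 2}, and correctly note that Lemma~\ref{lem:key} requires no dynamical hypothesis, but you then concede --- rightly --- that for the aperiodic non-marker systems of \cite{tsukamoto2022g, shi2021marker} no equivariant tiling by uniformly bounded tiles can exist, and you do not supply the substitute construction that would be needed. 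So nothing beyond what the paper already states is actually established.

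To be concrete about where the missing idea sits: the proof of Proposition~\ref{main prop} needs, for every $x\in X$, a window $[a,a+m-2]$ with $|a|\le K$ lying well inside a tile, on which $g$ reproduces $F(T^a x)$; this is what yields the containment $\pi^{-1}(y,z)\subset \cup_{a\in[-K,K]}F^{-1}(y_a,\dots,y_{a+m-2})$ and hence the bound on $\wdim_\epsilon(\pi^{-1}(y,z),d_n)$. Without markers the tiles cannot be uniformly bounded, the constant $K$ disappears, and this step collapses; your plan does not say how to replace it. Moreover, your proposed fallback of assuming the small boundary property is vacuous for Conjecture~\ref{conj:2}: by Lindenstrauss--Weiss the small boundary property forces $\mdim(X,T)=0$, contradicting the hypothesis of nonzero mean dimension, while assuming a marker factor essentially returns you to the setting already covered by Theorem~\ref{main thm}. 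The honest conclusion is that the statement remains open and your proposal, though correctly organized and faithful to the paper's architecture, does not close it.
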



\bibliographystyle{alpha}
\bibliography{universal_bib}

\end{document}